\newtheorem{lemma}{Lemma}[section]
\newtheorem{theorem}[lemma]{Theorem}
\newcommand{\R}{\mathbb{R}}
\newcommand{\orto}{\mathrm{O}}            
\newcommand{\mparam}{\alpha}              
\newcommand{\mchi}{\chi_{\rm m}}          
\DeclareMathOperator{\vol}{vol}           
\newcommand{\unijac}[2]{P_{#2}^{#1}}      
\newcommand{\optprob}[1]{{\arraycolsep=0pt%
  \begin{array}{r@{\ }l@{\quad}l}
    #1
  \end{array}}}
\newcommand{\defi}[1]{\textit{#1}}
\newcommand*{\centerfloat}{%
  \parindent \z@
  \leftskip \z@ \@plus 1fil \@minus \textwidth
  \rightskip\leftskip
  \parfillskip \z@skip}
\title{A recursive Lovász theta number for simplex-avoiding sets}
\author{Davi Castro-Silva}
\address{D. Castro-Silva, Department Mathematik/Informatik, Abteilung
  Mathematik, Universität zu Köln, Weyertal 86--90, 50931 Köln,
  Germany.}
\email{dcastros@uni-koeln.de}
\author{Fernando Mário de Oliveira Filho}
\address{F.M. de Oliveira Filho, Delft Institute of Applied Mathematics,
  Delft University of Technology, Mekelweg~4, 2628~CD Delft,
  The Netherlands.}
\email{F.M.deOliveiraFilho@tudelft.nl}
\author{Lucas Slot}
\address{L. Slot, Centrum Wiskunde \& Informatica, Postbus~94079, 1090~GB
  Amsterdam, The Netherlands.}
\email{lucas.slot@cwi.nl}
\author{Frank Vallentin}
\address{F. Vallentin, Department Mathematik/Informatik, Abteilung
  Mathematik, Universität zu Köln, Weyertal 86--90, 50931 Köln,
  Germany.}
\email{frank.vallentin@uni-koeln.de}
\thanks{This project has received funding from the European Union's Horizon
  2020 research and innovation programme under the Marie
  Sk\l{}odowska-Curie agreement No 764759. The fourth author is partially
  supported by the SFB/TRR 191 ``Symplectic Structures in Geometry, Algebra
  and Dynamics'' and by the project ``Spectral bounds in extremal discrete
  geometry'' (project number 414898050), both funded by the DFG}
\subjclass[2010]{05D10, 33C45, 52C10, 90C22}
\date{November 8, 2021}  
\begin{document}

\begin{abstract}
  We recursively extend the Lovász theta number to geometric hypergraphs on
  the unit sphere and on Euclidean space, obtaining an upper bound for the
  independence ratio of these hypergraphs.  As an application we reprove a
  result in Euclidean Ramsey theory in the measurable setting, namely that
  every $k$-simplex is exponentially Ramsey, and we improve existing bounds
  for the base of the exponential.
\end{abstract}

\maketitle
\markboth{D. Castro-Silva, F.M. de Oliveira Filho, L. Slot, and
  F. Vallentin}{A recursive Lovász theta number for simplex-avoiding sets}


\section{Introduction}

The Lovász theta number $\vartheta(G)$ of a finite graph~$G$ satisfies
$\alpha(G) \leq \vartheta(G) \leq \chi(\overline{G})$, where~$\alpha(G)$ is
the independence number of~$G$ and~$\chi(\overline{G})$ is the chromatic
number of the complement~$\overline{G}$ of~$G$, the graph whose edges are
the non-edges of~$G$; the theta number can be computed efficiently using
semidefinite programming.

Originally, Lovász~\cite{Lovasz1979} introduced~$\vartheta$ to determine
the Shannon capacity of the $5$-cycle.  The theta number turned out to be a
versatile tool in optimization, with applications in combinatorics and
geometry.  It is related to spectral bounds like Hoffman's bound, as noted
by Lovász in his paper (cf.~Bachoc, DeCorte, Oliveira, and
Vallentin~\cite{BachocDOV2014}), and also to Delsarte's linear programming
bound in coding theory, as observed independently by McEliece, Rodemich,
and Rumsey~\cite{McElieceRR1978} and Schrijver~\cite{Schrijver1979}.

Bachoc, Nebe, Oliveira, and Vallentin~\cite{BachocNOV2009}
extended~$\vartheta$ to infinite geometric graphs on compact metric spaces.
They also showed that this extension leads to the classical linear
programming bound for spherical codes of Delsarte, Goethals, and
Seidel~\cite{DelsarteGS1977}; the linear programming bound of Cohn and
Elkies for the sphere-packing density~\cite{CohnE2003} can also be seen as
an appropriate extension of~$\vartheta$~\cite{LaatOV2014, OliveiraV2019}.
These many applications illustrate the power of the Lovász theta number as
a unifying concept in optimization; Goemans~\cite{Goemans1997} even
remarked that ``it seems all paths lead to~$\vartheta$!''.

We will show how a recursive variant of~$\vartheta$ can be used to find
upper bounds for the independence ratio of geometric hypergraphs on the
sphere and on Euclidean space; this will lead to new bounds for a problem
in Euclidean Ramsey theory.


\subsection{Unit sphere}

We call a set~$\{x_1, \ldots, x_k\}$ of~$k \geq 2$ points on the
$(n-1)$-dimensional unit sphere~$S^{n-1} = \{\, x \in \R^n : \|x\| = 1\,\}$
a \defi{$(k, t)$-simplex} if~$x_i \cdot x_j = t$ for all~$i \neq j$.  The
convex hull of a $(k, t)$-simplex has dimension~$k-1$.  There is a
$(k, t)$-simplex in~$S^{n-1}$ for every~$k \leq n$
and~$t \in [-1/(k-1), 1)$.

Fix~$n \geq k \geq 2$ and~$t \in [-1/(k-1), 1)$.  A set of points
in~$S^{n-1}$ \defi{avoids} $(k, t)$-simplices if no~$k$ points in the set
form a $(k, t)$-simplex.  We are interested in the parameter
\[
  \mparam(S^{n-1}, k, t) = \sup\{\, \omega(I) : \text{$I \subseteq S^{n-1}$
    is measurable and avoids $(k, t)$-simplices}\,\},
\]
where~$\omega$ is the surface measure on the sphere normalized so the total
measure is~$1$.  This is the independence ratio of the hypergraph whose
vertex set is~$S^{n-1}$ and whose edges are all $(k, t)$-simplices.

In~\S\ref{sec:sphere} we will define the
parameter~$\vartheta(S^{n-1}, k, t)$ recursively as the optimal value of
the problem
\[
  \optprob{\sup&\int_{S^{n-1}} \int_{S^{n-1}} f(x\cdot y)\,
    d\omega(y) d\omega(x)\\[1ex]
    &f(1) = 1,\\
    &f(t) \leq \vartheta(S^{n-2}, k-1, t/(1+t)),\\
    &\text{$f \in C([-1, 1])$ is a function of positive type
      for~$S^{n-1}$} }
\]
for~$k \geq 3$.  The base of the recursion is~$k = 2$:
$\vartheta(S^{n-1}, 2, t)$ is the optimal value of the problem above
when~``$f(t) \leq \vartheta(S^{n-2}, k- 1, t/(1+t))$'' is replaced
by~``$f(t) = 0$''.

From Theorem~\ref{thm:sphere-nplus} below it follows
that~$\vartheta(S^{n-1},k,t) \geq \mparam(S^{n-1}, k, t)$.  Using extremal
properties of ultraspherical polynomials, an explicit formula can be
computed for this bound, as shown in Theorem~\ref{thm:theta-k}.


\subsection{Euclidean space}

Transferring these concepts from the compact unit sphere to the non-compact
Euclidean space requires a bit of care; this is done
in~\S\ref{sec:Euclidean}.

A \defi{unit $k$-simplex} in~$\R^n$ is a set~$\{x_1, \ldots, x_k\}$
of~$k \leq n + 1$ points such that $\|x_i-x_j\|=1$ for all~$i \neq j$.  As
before, the dimension of the convex hull of a unit $k$-simplex is~$k-1$.  A
set of points in~$\R^n$ \defi{avoids} unit $k$-simplices if no~$k$ points
in the set form a unit $k$-simplex.  We are interested in the parameter
\[
  \mparam(\R^n, k) = \sup\{\, \overline{\delta}(I) : \text{$I \subseteq
    \R^n$ is measurable and avoids unit $k$-simplices}\,\},
\]
where~$\overline{\delta}(X)$ is the \defi{upper density}
of~$X \subseteq \R^n$, that is,
\[
  \overline{\delta}(X) = \limsup_{T \to \infty} \frac{\vol(X \cap [-T,
    T]^n)}{\vol [-T, T]^n}.
\]

Again, this parameter has an interpretation in terms of the independence
ratio of a hypergraph on the Euclidean space and again we can bound the
independence ratio from above by an appropriately defined
parameter~$\vartheta(\R^n,k)$.  Theorem~\ref{thm:theta-rn-k} below gives an
explicit expression for~$\vartheta(\R^n, k)$ in terms of Bessel functions
and ultraspherical polynomials.


\subsection{Euclidean Ramsey theory}
\label{sec:Ramsey}

The central question of Euclidean Ramsey theory is: given a finite
configuration~$P$ of points in~$\R^n$ and an integer~$r \geq 1$, does every
$r$-coloring of~$\R^n$ contain a monochromatic congruent copy of~$P$?

The simplest point configurations are unit $k$-simplices, which are known
to have the exponential Ramsey property: the minimum number~$\chi(\R^n, k)$
of colors needed to color the points of~$\R^n$ in such a way that there are
no monochromatic unit $k$-simplices grows exponentially in~$n$.  This was
first proved by Frankl and Wilson~\cite{FranklW1981} for~$k = 2$ and by
Frankl and Rödl~\cite{FranklR1987} for~$k > 2$.  Results in this area are
usually proved by the linear algebra method; see also
Sagdeev~\cite{Sagdeev2018}.

Recently, Naslund~\cite{Naslund2020} used the slice-rank method from the
work of Croot, Lev, and Pach~\cite{CrootLP2017} and Ellenberg and
Gijswijt~\cite{EllenbergG2017} on the cap-set problem\footnote{The
  slice-rank method is only implicit in the original works; the actual
  notion of slice rank for a tensor was introduced by Tao in a blog
  post~\cite{Tao2016}.} to prove that
\[
  \chi(\R^n, 3) \geq (1.01466 + o(1))^n.
\]
This is the best lower bound known at the moment.

For simplices of higher dimension, Sagdeev~\cite{Sagdeev2018b} used a
quantitative version of the Frankl-Rödl theorem to show that
\[
  \chi(\R^n, k) \geq \biggl(1 + \frac{1}{2^{2^{k+3}}} + o(1)\biggr)^n.
\]

Denote by~$H(n, k)$ the \defi{unit-distance hypergraph}, namely the
$k$-uniform hypergraph whose vertex set is~$\R^n$ and whose edges are all
unit $k$-simplices.  The parameter~$\chi(\R^n, k)$ is the chromatic number
of this hypergraph.  A theorem of de Bruijn and Erdős~\cite{BruijnE1951}
shows\footnote{De Bruijn and Erdős consider only~$k = 2$, but their result
  can be easily generalized to~$k \geq 3$.} that computing~$\chi(\R^n, k)$
is a combinatorial problem: the chromatic number of~$H(n, k)$ is the
maximum chromatic number of any finite subgraph of~$H(n, k)$.

Determining~$\chi(\R^n, 2)$ is known as the Nelson-Hadwiger problem.  The
problem was proposed by Nelson in~1950
(cf.~Soifer~\cite[Chapter~3]{Soifer2009}), who used the Moser spindle, a
$4$-chromatic $7$-vertex subgraph of~$H(2, 2)$, to show
that~$\chi(\R^2, 2) \geq 4$.  Isbell (cf.~Soifer, ibid.), also in~1950,
proved that~$\chi(\R^2, 2) \leq 7$ by constructing a coloring of~$H(2, 2)$.

The difficulty of finding subgraphs of~$H(2, 2)$ with chromatic number
higher than~$4$ led Falconer~\cite{Falconer1986} to define the
\defi{measurable chromatic number}~$\mchi(\R^n, 2)$ by requiring the color
classes to be Lebesgue-measurable sets; we define~$\mchi(\R^n, k)$ likewise
for~$k \geq 3$.  Of course, $\mchi(\R^n, k) \geq \chi(\R^n, k)$, but it is
not known whether the two numbers differ.  Falconer could show
that~$\mchi(\R^2, 2) \geq 5$, whereas a proof that~$\chi(\R^2, 2) \geq 5$
was only obtained more than three decades later by de Grey~\cite{Grey2018},
who found by computer a $5$-chromatic subgraph of~$H(2, 2)$ with~1581
vertices.

The restriction to measurable color classes also helps improving asymptotic
lower bounds.  Frankl and Wilson~\cite{FranklW1981} give a combinatorial
proof that
\[
  \chi(\R^n, 2) \geq (1.2 + o(1))^n.
\]
By using analytical techniques, Bachoc, Passuello, and
Thiery~\cite{BachocPT2015} could show that
\[
  \mchi(\R^n, 2) \geq (1.268 + o(1))^n.
\]

Similarly, the analytical tools developed in this paper can also be used to
improve lower bounds for~$\mchi(\R^n, k)$.  Since
\begin{equation}
  \label{eq:chi-alpha}
  \mparam(\R^n, k) \mchi(\R^n, k) \geq 1,
\end{equation}
any upper bound for~$\mparam(\R^n, k)$ gives a lower bound
for~$\mchi(\R^n, k)$, hence
\[
  \mchi(\R^n, k) \geq \lceil 1 / \vartheta(\R^n, k)\rceil.
\]
In~\S\ref{sec:decay} we analyze the upper bounds~$\vartheta(S^{n-1}, k, t)$
for simplex-avoiding sets on the sphere and~$\vartheta(\R^n, k)$ for
simplex-avoiding sets on Euclidean space by using properties of
ultraspherical polynomials, obtaining the following theorem.

\begin{theorem}
  \label{thm:main-decay}
  If~$k \geq 2$, then:
  \begin{itemize}
  \item[(i)] for every~$t \in (0, 1)$, there is a
    constant~$c = c(k, t) \in (0, 1)$ such
    that $\vartheta(S^{n-1}, k, t) \leq (c + o(1))^n$;

  \item[(ii)] there is a constant~$c = c(k) \in (0, 1)$ such
    that~$\vartheta(\R^n, k) \leq (c + o(1))^n$.
  \end{itemize}
\end{theorem}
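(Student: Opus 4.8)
The plan is to prove both parts by analyzing the recursively defined optimization problems directly, constructing explicit feasible solutions whose objective value decays exponentially. The key observation is that the recursion for $\vartheta(S^{n-1}, k, t)$ bottoms out at $k = 2$, where the constraint is $f(t) = 0$; since the base case $\vartheta(S^{n-1}, 2, t)$ is a Delsarte-type linear programming bound, its exponential decay for $t \in (0,1)$ is classical (the relevant polynomial $\frac{x-t}{1-t}$ of positive type gives objective value equal to the corresponding Gegenbauer coefficient ratio, which decays exponentially; more carefully, one uses the optimal polynomial built from ultraspherical polynomials $\unijac{n}{d}$ as in Theorem~\ref{thm:theta-k}). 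So the heart of the argument is to show that the recursion preserves exponential decay: if $\vartheta(S^{n-2}, k-1, t/(1+t)) \leq (c' + o(1))^n$ for some $c' \in (0,1)$, then the same holds for $\vartheta(S^{n-1}, k, t)$ with a possibly worse base $c \in (0,1)$.

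For the inductive step I would take a feasible $f$ for the $(k-1)$-problem on $S^{n-2}$ and use it to build a feasible $g$ for the $(k)$-problem on $S^{n-1}$. The natural ansatz is $g(u) = \lambda_0 + (1-\lambda_0) h(u)$ where $h$ is of positive type for $S^{n-1}$ with $h(1) = 1$, chosen so that $g(t)$ meets the required upper bound $\vartheta(S^{n-2}, k-1, t/(1+t))$; the objective value of $g$ is then $\lambda_0 + (1-\lambda_0) \widehat{h}_0$ where $\widehat{h}_0$ is the mean of $h$. To force the objective down to $(c + o(1))^n$ we need $\lambda_0$ exponentially small and $\widehat{h}_0$ exponentially small simultaneously. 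Concretely, pick $h$ to be a normalized ultraspherical polynomial (or a nonnegative combination thereof) of fixed degree $d$: its mean $\widehat{h}_0$ is $0$ for $d \geq 1$, and the value $h(t)$ for fixed $t \in (0,1)$ is $\unijac{n}{d}(t)$, which decays like $(c_d + o(1))^n$ for an explicit $c_d = c_d(t) < 1$ (this is exactly the ultraspherical decay estimate underlying Theorem~\ref{thm:theta-k}; for instance $d = 1$ gives $c_1 = t$). Then $g(t) = \lambda_0 + (1-\lambda_0)\unijac{n}{d}(t)$, and since the target bound $\vartheta(S^{n-2}, k-1, t/(1+t))$ is itself only exponentially small of base $c'$, we can afford $\lambda_0$ of size $\max\{c', c_d\}^n$ up to lower order terms — giving $\vartheta(S^{n-1}, k, t) \leq (\max\{c', c_d(t)\} + o(1))^n$ with $\max\{c', c_d(t)\} < 1$. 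Iterating the recursion $k - 2$ times and tracking how $t$ transforms under $t \mapsto t/(1+t)$ (which keeps $t$ in $(0,1)$) yields part (i).

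For part (ii), the parameter $\vartheta(\R^n, k)$ is obtained as a limit of the spherical parameters $\vartheta(S^{n-1}, k, t)$ as the radius goes to infinity (equivalently, $t \to 1^-$ at an appropriate rate), and Theorem~\ref{thm:theta-rn-k} provides an explicit expression in terms of Bessel functions and ultraspherical polynomials. The cleanest route is to not pass to the limit of the bounds from (i) — since there $c(k,t) \to 1$ as $t \to 1$ — but rather to run the analogous recursive construction directly in the Euclidean setting, using the Euclidean base case $\vartheta(\R^n, 2)$, for which the exponential decay $(c + o(1))^n$ with an explicit $c < 1$ is already established in the literature (this is essentially the Bachoc–Passuello–Thiery bound, or Theorem~\ref{thm:theta-rn-k} specialized to $k = 2$). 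The recursive step is identical in spirit: build a feasible function as $\lambda_0 + (1-\lambda_0) h$ with $h$ a suitable positive-type function on $\R^n$ (a normalized combination of $\Omega_n(r\|x\|)$-type functions at fixed scale), so that $h$ has exponentially small mean and its value at the relevant distance is exponentially small, and choose $\lambda_0$ exponentially small to meet the inductively obtained upper bound on $\vartheta(\R^{n-1}, k-1)$; iterating gives the claim.

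The main obstacle I expect is the bookkeeping in the inductive step: one must verify that the constructed $g$ is genuinely of positive type for $S^{n-1}$ (not merely for $S^{n-2}$), which is where the precise choice of $h$ matters — ultraspherical polynomials $\unijac{n}{d}$ are of positive type for $S^{n-1}$ by definition, so restricting attention to nonnegative combinations of these of bounded degree keeps this automatic, at the cost of having to check the single constraint $g(t) \leq \vartheta(S^{n-2}, k-1, t/(1+t))$ is satisfiable with an exponentially small $\lambda_0$. The quantitative heart is the ultraspherical decay estimate $\unijac{n}{d}(t) = (c_d(t) + o(1))^n$ for fixed $d$ and $t \in (0,1)$, with $c_d(t) < 1$; this follows from the known asymptotics of Gegenbauer polynomials (or the generating function / Laplace method) and is presumably the content of a lemma proved in \S\ref{sec:decay} preceding this theorem, so I would invoke it rather than reprove it. The transformation of the inner-product parameter under $t \mapsto t/(1+t)$ needs to be tracked to confirm it stays in $(0,1)$ through all $k-2$ levels, but this is elementary since $t/(1+t) \in (0, 1/2)$ whenever $t \in (0,1)$.
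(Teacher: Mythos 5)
There is a genuine gap, and it is twofold. First, the logic runs in the wrong direction: $\vartheta(S^{n-1},k,t)$ is defined as the \emph{supremum} of the objective over feasible functions of positive type, so exhibiting one feasible $g=\lambda_0+(1-\lambda_0)h$ with small objective only proves a \emph{lower} bound on $\vartheta$; the theorem requires showing that \emph{every} feasible $f$ has exponentially small objective. The paper does this by solving the problem exactly: by Schoenberg's characterization any feasible $f=\sum_j f_j\unijac{n}{j}$ with $f_j\geq 0$ and $\sum_j f_j=1$ has objective $f_0$, and the constraint $f(t)\leq\gamma$ forces $f_0\leq(\gamma-M_n(t))/(1-M_n(t))$ with $M_n(t)=\min_{j\geq 0}\unijac{n}{j}(t)$, which is Theorem~\ref{thm:theta-k}; your ansatz never rules out better feasible solutions. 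Second, even reading your step as an analysis of the optimum, the quantitative claim at its heart is false: for \emph{fixed} degree $d$ the polynomial $\unijac{n}{d}(t)$ does not decay exponentially in $n$ --- indeed $\unijac{n}{1}(t)=t$ for every $n$, and $\unijac{n}{d}(t)\to t^d>0$ as $n\to\infty$. What actually has to be shown is that the negative minimum $M_n(t)$ over \emph{all} degrees satisfies $|M_n(t)|\leq(c+o(1))^n$, and this forces one to control degrees $j$ growing linearly in $n$: the paper first shows, via the Elbert--Laforgia bound on the largest zero of Gegenbauer polynomials, that $\unijac{n}{j}(t)\geq 0$ whenever $j\leq a(t)n-2a(t)$ with $a(t)>0$, so the minimizing degree is $\Omega(n)$, and then applies the Gegenbauer integral representation (Lemma~\ref{lem:jac-decay}) to get exponential decay of $|\unijac{n}{j}(t)|$ in that range. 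Nothing in your proposal addresses this minimum over all degrees; incidentally, the polynomial $(x-t)/(1-t)$ you invoke for the base case has a negative coefficient on the constant term and is not of positive type.

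For part (ii) there is an additional structural misreading: $\vartheta(\R^n,k)$ is neither defined by a recursion on $\vartheta(\R^{n-1},k-1)$ nor obtained as a limit of the spherical bounds as the radius grows; its defining problem uses $\gamma=\vartheta(S^{n-1},k-1,1/2)$, and the closed form of Theorem~\ref{thm:theta-rn-k} involves $m_n$, the minimum of the Bessel-type function $\Omega_n$. So besides the spherical decay at the fixed inner products $1/(2+i)$, one needs the separate estimate $|m_n|\leq(2/e+o(1))^{n/2}$ (imported from Bachoc, Nebe, Oliveira, and Vallentin), which your proposal does not supply.
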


From this theorem we get an exponential lower bound for~$\mchi(\R^n, k)$.
Rigorous estimates of the constant~$c$ then yield significantly better
lower bounds for~$\mchi(\R^n, k)$ than those coming from~$\chi(\R^n, k)$.

Indeed, in the case $k=3$ we obtain (see \S\ref{sec:exp-bounds})
\[
  \mparam(\R^n, 3) \leq (0.95622 + o(1))^n,
\]
and so
\[
  \mchi(\R^n, 3) \geq (1.04578 + o(1))^n.
\]
We also obtain the rougher estimate
\[
  \mparam(\R^n, k) \leq \biggl(1 - \frac{1}{9(k-1)^2} + o(1)\biggr)^n,
\]
valid for all $k \geq 3$, which immediately implies
\[
  \mchi(\R^n, k) \geq \biggl(1 + \frac{1}{9(k-1)^2} + o(1)\biggr)^n.
\]

Though our lower bounds for~$\mchi(\R^n, k)$ do not necessarily hold
for~$\chi(\R^n, k)$, they do imply some structure for general colorings.
If a coloring of~$H(n, k)$ uses fewer than~$1 / \mparam(\R^n, k)$ colors,
then the closure of one of the color classes is a measurable set with
density greater than~$\mparam(\R^n, k)$, and so it contains a unit
$k$-simplex.  This means that in such a coloring there are monochromatic
$k$-point configurations arbitrarily close to unit $k$-simplices.


\subsection{Notation and preliminaries}

We will denote the Euclidean inner product between~$x$, $y \in \R^n$
by~$x\cdot y$.  The surface measure on the sphere is denoted by~$\omega$
and is always normalized so the total measure is~$1$.

We always normalize the Haar measure on a compact group so the total
measure is~$1$.  By~$\orto(n)$ we denote the group of $n\times n$
orthogonal matrices.  If~$X \subseteq S^{n-1}$ is any measurable set and
if~$\mu$ is the Haar measure on~$\orto(n)$, then for every~$e \in S^{n-1}$
we have
\[
  \mu(\{\, T \in \orto(n) : Te \in X\, \}) = \omega(X).
\]




We will need the following technical lemma, which will be applied to the
sphere and the torus.  For a proof, see Lemma~5.5 in DeCorte, Oliveira, and
Vallentin~\cite{DeCorteOV2020}.

\begin{lemma}
  \label{lem:averaging}
  Let~$V$ be a metric space and~$\Gamma$ be a compact group that acts
  transitively on~$V$; let~$\nu$ be a finite Borel measure on~$V$ that is
  positive on open sets.  Denote by~$\mu$ the Haar measure on~$\Gamma$.  If
  the metric on~$V$ and the measure~$\nu$ are $\Gamma$-invariant and
  if~$f \in L^2(V; \nu)$, then the function~$K\colon V \times V \to \R$
  such that
  \[
    K(x, y) = \int_\Gamma f(\sigma x) f(\sigma y)\, d\mu(\sigma)
  \]
  is continuous.
\end{lemma}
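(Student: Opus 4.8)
The plan is to show that $K$ is continuous by first establishing it for a dense subclass of ``nice'' functions and then passing to the limit using the $\Gamma$-invariance of $\nu$, which makes the map $f \mapsto K$ continuous from $L^2(V;\nu)$ into the space of bounded functions on $V\times V$ with the supremum norm.

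First I would treat the case where $f$ is itself continuous (and hence, since $V$ may be noncompact, I should either assume $f$ is also bounded or work locally; in the intended applications $V$ is compact, so boundedness is automatic --- more generally one uses that continuous functions with, say, bounded support are dense in $L^2$). For such $f$, fix $(x_0,y_0)\in V\times V$ and a sequence $(x_i,y_i)\to (x_0,y_0)$. Since $\Gamma$ acts transitively by isometries and $\nu$ is $\Gamma$-invariant, the integrand $\sigma\mapsto f(\sigma x_i)f(\sigma y_i)$ is bounded uniformly in $i$ by $\|f\|_\infty^2$, and for each fixed $\sigma$ we have $\sigma x_i \to \sigma x_0$ and $\sigma y_i\to\sigma y_0$ (the action $\Gamma\times V\to V$ is continuous), so $f(\sigma x_i)f(\sigma y_i)\to f(\sigma x_0)f(\sigma y_0)$ pointwise. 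By dominated convergence, $K(x_i,y_i)\to K(x_0,y_0)$, so $K$ is continuous.

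Next I would remove the continuity hypothesis on $f$. Given $f\in L^2(V;\nu)$ and $\varepsilon>0$, choose a bounded continuous $g$ with $\|f-g\|_{L^2(V;\nu)}<\varepsilon$, and let $K_f,K_g$ be the associated kernels. Using the Cauchy--Schwarz inequality in $L^2(\Gamma;\mu)$ together with the invariance identity $\int_\Gamma h(\sigma x)\,d\mu(\sigma)=\frac{1}{\nu(V)}\int_V h\,d\nu$ (valid for any $h\in L^1(V;\nu)$, since $\Gamma$ acts transitively and measure-preservingly on $(V,\nu)$), one bounds
\[
  |K_f(x,y)-K_g(x,y)|
  \le \int_\Gamma \bigl|f(\sigma x)f(\sigma y)-g(\sigma x)g(\sigma y)\bigr|\,d\mu(\sigma)
  \le \|f\|_{2}\|f-g\|_{2} + \|g\|_{2}\|f-g\|_{2}
\]
uniformly in $(x,y)$, after splitting $f(\sigma x)f(\sigma y)-g(\sigma x)g(\sigma y)= f(\sigma x)\bigl(f(\sigma y)-g(\sigma y)\bigr) + \bigl(f(\sigma x)-g(\sigma x)\bigr)g(\sigma y)$ and applying Cauchy--Schwarz to each term (all $L^2$ norms are over $\Gamma$, but equal the corresponding norms over $V$ up to the constant $1/\nu(V)$ by the invariance identity). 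Thus $K_g\to K_f$ uniformly on $V\times V$ as $g\to f$ in $L^2$; since each $K_g$ is continuous by the previous step, $K_f$ is a uniform limit of continuous functions and hence continuous.

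The main obstacle is the passage from continuous $f$ to general $f\in L^2$: one must know that $K_f$ depends on $f$ only through its $L^2$-class and does so continuously in the sup-norm, and this is exactly where the $\Gamma$-invariance of $\nu$ (not merely its positivity on open sets) is essential, via the averaging identity $\int_\Gamma h(\sigma x)\,d\mu(\sigma)=\nu(V)^{-1}\int_V h\,d\nu$. Establishing that identity --- which follows from transitivity of the action, uniqueness of the invariant probability measure on $V$, and Fubini --- is the one point that needs a little care; everything else is dominated convergence and Cauchy--Schwarz.
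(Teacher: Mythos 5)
Your argument is correct, and it is worth noting that the paper itself does not prove this lemma at all: it simply cites Lemma~5.5 of DeCorte, Oliveira, and Vallentin~\cite{DeCorteOV2020}. The proof there is built on the same ingredients you use, but packaged differently: one writes $K(x,y)=\langle f_x,f_y\rangle_{L^2(\Gamma;\mu)}$ with $f_x(\sigma)=f(\sigma x)$ and shows that $x\mapsto f_x$ is continuous from $V$ into $L^2(\Gamma;\mu)$ (again by approximating $f$ in $L^2(V;\nu)$ by continuous functions and using the invariance of $\nu$), after which continuity of $K$ follows from continuity of the inner product; you instead prove continuity of $K$ directly for continuous $f$ by dominated convergence and then pass to general $f\in L^2$ by showing $K_g\to K_f$ uniformly. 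The two routes are essentially equivalent, and your version is fine as a self-contained proof. Two small points deserve attention in a write-up: (i) in the dominated-convergence step you only need each fixed $\sigma$ to act as an isometry, which the $\Gamma$-invariance of the metric gives you, so no appeal to joint continuity of the action is needed there; (ii) the averaging identity $\int_\Gamma h(\sigma x)\,d\mu(\sigma)=\nu(V)^{-1}\int_V h\,d\nu$ amounts to the statement that the pushforward of $\mu$ under $\sigma\mapsto\sigma x$ equals $\nu/\nu(V)$, i.e.\ uniqueness of the $\Gamma$-invariant Borel probability measure on $V$; this is standard (finite Borel measures on a metric space are determined by integrals of bounded continuous functions, and for such $h$ the inner integral is constant in $x$ by transitivity and Haar invariance), and it also guarantees that $K_f$ is well defined on $L^2$-classes, but it does rely on joint measurability of the action, which holds in the intended applications. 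You correctly flag this as the one step needing care, so the proposal has no genuine gap.
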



\section{Simplex-avoiding sets on the sphere}
\label{sec:sphere}

We call a continuous kernel~$K\colon S^{n-1} \times S^{n-1} \to \R$
\defi{positive} if for every finite set~$U \subseteq S^{n-1}$ the
matrix~$\bigl(K(x, y)\bigr)_{x,y \in U}$ is positive semidefinite.  A
continuous function $f\colon [-1, 1] \to \R$ is of \defi{positive type
  for~$S^{n-1}$} if the kernel~$K\in C(S^{n-1} \times S^{n-1})$ given by
$K(x, y) = f(x\cdot y)$ is positive.

Fix~$n \geq k \geq 3$ and~$t \in [-1/(k-1), 1)$.  For any~$\gamma \geq 0$,
consider the optimization problem
\begin{equation}
  \label{eq:general-sphere-nplus}
  \optprob{\sup&\int_{S^{n-1}} \int_{S^{n-1}} f(x\cdot y)\,
    d\omega(y) d\omega(x)\\[1ex]
    &f(1) = 1,\\
    &f(t) \leq \gamma,\\
    &\text{$f \in C([-1, 1])$ is a function of positive type for~$S^{n-1}$.}
  }
\end{equation}

\begin{theorem}
  \label{thm:sphere-nplus}
  Fix~$n \geq k \geq 3$, $t \in [-1/(k-1), 1)$.
  If~$\gamma \geq \mparam(S^{n-2}, k - 1, t/(1+t))$, then the optimal value
  of~\eqref{eq:general-sphere-nplus} is an upper bound
  for~$\mparam(S^{n-1}, k, t)$.
\end{theorem}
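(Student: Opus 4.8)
The plan is to bound the independence ratio $\mparam(S^{n-1}, k, t)$ by exhibiting, for any feasible $f$ of~\eqref{eq:general-sphere-nplus} and any measurable simplex-avoiding set $I \subseteq S^{n-1}$, the inequality $\omega(I)^2 \le \int\!\!\int f(x\cdot y)\,d\omega(y)d\omega(x)$. The standard theta-number trick is to apply $f$ to the indicator of $I$: set $K(x,y) = f(x\cdot y)$ and consider $\int_I \int_I K(x,y)\, d\omega(y) d\omega(x)$. Since $f$ is of positive type for $S^{n-1}$, one can write $f$ via its expansion in ultraspherical polynomials with nonnegative coefficients, from which $\int_{S^{n-1}}\int_{S^{n-1}} (\mathbf 1_I(x) - \omega(I))(\mathbf 1_I(y) - \omega(I)) f(x\cdot y)\,d\omega d\omega \ge 0$; expanding and using $\int\!\!\int f = c_0$ (the constant term, which equals the objective value) gives $\int_I\int_I f(x\cdot y)\, d\omega d\omega \ge \omega(I)^2 \cdot c_0 \,/\, 1$ — more precisely $\int_I \int_I f \ge \omega(I)^2 \int_{S^{n-1}}\int_{S^{n-1}} f$. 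So it suffices to show $\int_I\int_I f(x\cdot y)\,d\omega(y)d\omega(x) \le \omega(I)^2$, i.e. that the average of $f$ over pairs of points of $I$ is at most $1$.

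To get that bound we use the two constraints $f(1)=1$ and $f(t)\le\gamma$ together with the hypergraph structure. Split the double integral over $I\times I$ according to the inner product $x\cdot y$. The diagonal-like contribution where $x = y$ has measure zero, so really we must control pairs with $x\cdot y = t$ and all other pairs. The key idea: for a fixed $x\in I$, look at the "slice" $I_x = \{\, y \in I : x\cdot y = t\,\}$ sitting inside the sphere $\{y : x\cdot y = t\}$, which is a scaled copy of $S^{n-2}$. Points $y_1,\dots,y_{k-1}$ in this slice together with $x$ form a $(k,t)$-simplex in $S^{n-1}$ precisely when $y_1,\dots,y_{k-1}$ form a $(k-1, t')$-simplex \emph{within that slice-sphere}, where a short computation (projecting onto the hyperplane orthogonal to $x$, rescaling to the unit sphere) shows the relevant inner product is $t' = t/(1+t)$ — this is exactly the parameter appearing in the recursion. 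Since $I$ avoids $(k,t)$-simplices, the slice $I_x$, viewed inside its $S^{n-2}$, must avoid $(k-1, t')$-simplices, so its normalized measure is at most $\mparam(S^{n-2}, k-1, t')\le\gamma$.

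Now I would bound $\int_I\int_I f(x\cdot y)\,d\omega(y)d\omega(x)$ by splitting into the part with $x\cdot y=t$ and the rest. Because $f$ is continuous and the measure of $\{(x,y): x\cdot y = t\}$ inside $S^{n-1}\times S^{n-1}$ is zero, one cannot literally isolate that set; instead the right tool is an averaging/limiting argument, or better, to not split but to directly use an averaging over the orthogonal group. A cleaner route: use Lemma~\ref{lem:averaging} with $V = S^{n-1}$, $\Gamma = \orto(n)$, and $f_0 = \mathbf 1_I \in L^2$; then $K(x,y) = \int_{\orto(n)} \mathbf 1_I(\sigma x)\mathbf 1_I(\sigma y)\,d\mu(\sigma)$ is a \emph{continuous} kernel, it is $\orto(n)$-invariant hence of the form $g(x\cdot y)$ for a continuous $g$, and $g(1) = \omega(I)$, $g(t) = $ (probability that a random rotation maps a fixed $(2,t)$-pair into $I$). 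Then one relates this to $f$: for feasible $f$ with expansion $f = \sum_d \hat f_d Z_d$ (ultraspherical/Gegenbauer, $\hat f_d \ge 0$), evaluate the pairing $\sum_d \hat f_d \hat g_d \ge 0$; using $f(1) = 1$ this pairing dominates the comparison we need, and the constraint $g(t)\le\gamma$ (which is the slice bound above, now phrased measure-theoretically and valid pointwise because $g$ is continuous and the slice argument applies to almost every $x$, hence by continuity to every relevant configuration) closes the loop. The main obstacle, and where care is needed, is precisely this passage from "almost every slice avoids $(k-1,t')$-simplices" to the \emph{continuous} bound $g(t)\le\gamma$: one must set up the averaged kernel $g$ so that $g(t)$ genuinely computes the measure of an admissible slice, invoke Fubini to see that $g(t)\le\mparam(S^{n-2},k-1,t/(1+t))$ for almost all representative pairs and then use continuity of $g$ to upgrade this to all pairs with inner product $t$. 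Once $g(t)\le\gamma$ and $g(1)=\omega(I)$ are in hand, feasibility of $f$ gives $\omega(I)^2 \le \int\!\!\int f\,d\omega d\omega$ by the positive-type pairing, which is the desired inequality; taking the supremum over $I$ and the infimum over feasible $f$ yields the theorem.
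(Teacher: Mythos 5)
Your construction of the averaged kernel and your slice analysis (including the computation $t'=t/(1+t)$ and the fact that a slice of a $(k,t)$-simplex-avoiding set avoids $(k-1,t')$-simplices) are exactly the main ingredients of the paper's proof, but the way you close the argument is wrong both in direction and in substance. The theorem asserts that the \emph{supremum} in \eqref{eq:general-sphere-nplus} is at least $\omega(I)$ for every measurable avoiding set $I$; for that you must exhibit one feasible solution with objective value $\omega(I)$, namely $f=g/g(1)$ where $g(x\cdot y)=\int_{\orto(n)}\chi_I(Tx)\chi_I(Ty)\,d\mu(T)$ is the kernel you built. Instead, you try to prove that \emph{every} feasible $f$ satisfies $\omega(I)^2\le\int\int f\,d\omega\,d\omega$ and then speak of ``taking the infimum over feasible $f$''; there is no infimum to take in a maximization problem, and the universal claim is false: for $j$ large, $f=\unijac{n}{j}$ is feasible (it is of positive type, $\unijac{n}{j}(1)=1$, and $\unijac{n}{j}(t)\to 0<\gamma$ by \eqref{eq:jac-limit}, since $\gamma\ge\mparam(S^{n-2},k-1,t/(1+t))>0$), yet its objective value is $0$ by orthogonality, while avoiding sets of positive measure certainly exist. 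Moreover, even if that inequality held it would only give $\vartheta\ge\mparam^2$, which is weaker than the claimed $\vartheta\ge\mparam$ because $\mparam\le 1$; and your intermediate reduction ``it suffices to show $\int_I\int_I f\le\omega(I)^2$'', combined with your own positive-type estimate $\int_I\int_I f\ge\omega(I)^2\int\int f$, would yield only $\int\int f\le 1$, which says nothing about $\omega(I)$.

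The repair uses only what you already have, and it is the paper's argument. By Fubini, $\int\int g(x\cdot y)\,d\omega(x)\,d\omega(y)=\int_{\orto(n)}\bigl(\int\chi_I(Tx)\,d\omega(x)\bigr)^2\,d\mu(T)=\omega(I)^2$, so $f=g/g(1)$ has objective value exactly $\omega(I)$. For its feasibility you need $f(t)\le\gamma$, i.e.\ the sharper estimate $g(t)\le\gamma\,\omega(I)$, not the bound $g(t)\le\gamma$ you recorded (dividing the latter by $g(1)=\omega(I)$ gives only $\gamma/\omega(I)$, which may exceed $\gamma$): fixing a pair with $x\cdot y=t$ and writing, again via Fubini, $g(t)=\int_{\orto(n)}\chi_I(Tx)\int_{U_{x,t}}\chi_I(Tz)\,d\nu(z)\,d\mu(T)$, your slice argument bounds the inner integral by $\gamma$ whenever $\chi_I(Tx)=1$, and the remaining factor $\chi_I(Tx)$ integrates to $\omega(I)$. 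Note also that the ``almost every slice, then upgrade by continuity'' step you anticipate as the main obstacle is not needed: the slice bound is consumed inside the integral over $T$, so almost-everywhere validity suffices, and the continuity of $g$ provided by Lemma~\ref{lem:averaging} is used only to know that the invariant positive kernel has the form $g(x\cdot y)$ with $g$ continuous and of positive type, so that $f=g/g(1)$ is an admissible function for \eqref{eq:general-sphere-nplus}.
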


\begin{proof}
Let~$I \subseteq S^{n-1}$ be a measurable set that avoids
$(k, t)$-simplices and assume~$\omega(I) > 0$.  Consider the
kernel~$K\colon S^{n-1} \times S^{n-1} \to \R$ such that
\[
  K(x, y) = \int_{\orto(n)} \chi_I(Tx) \chi_I(Ty)\, d\mu(T),
\]
where~$\chi_I$ is the characteristic function of~$I$ and where~$\mu$ is the
Haar measure on~$\orto(n)$.

By taking~$V = S^{n-1}$ and~$\Gamma = \orto(n)$ in
Lemma~\ref{lem:averaging}, we see that~$K$ is continuous.  By
construction,~$K$ is also positive and invariant, that
is,~$K(Tx, Ty) = K(x, y)$ for all~$T \in \orto(n)$ and
$x$,~$y \in S^{n-1}$.  Such kernels are of the form
$K(x, y) = g(x\cdot y)$, where $g \in C([-1, 1])$ is of positive type
for~$S^{n-1}$.  Note that
\[
  K(x, x) = \int_{\orto(n)} \chi_I(Tx)\, d\mu(T) = \omega(I),
\]
so~$g(1) = \omega(I) > 0$.

Set~$f = g / g(1)$.  Immediately we have that~$f$ is continuous and of
positive type and that~$f(1) = 1$; moreover
\[
  \int_{S^{n-1}} \int_{S^{n-1}} f(x\cdot y)\, d\omega(y) d\omega(x) =
  \omega(I).
\]
Hence, if we show that~$f(t) \leq \gamma$, the theorem will follow.

If~$x \in S^{n-1}$ is a point in a $(k, t)$-simplex, all other points in
the simplex are in~$U_{x,t} = \{\, y \in S^{n-1} : y \cdot x = t\,\}$.
Note that~$U_{x,t}$ is an $(n-2)$-dimensional sphere with
radius~$(1 - t^2)^{1/2}$; let~$\nu$ be the surface measure on~$U_{x,t}$
normalized so the total measure is~$1$.

If~$T \in \orto(n)$ is any orthogonal matrix, then~$TI$ avoids
$(k, t)$-simplices.  Hence if~$x \in TI$, then~$TI \cap U_{x,t}$ cannot
contain~$k-1$ points with pairwise inner product~$t$, and so
$\nu(TI \cap U_{x,t}) \leq \mparam(S^{n-2}, k - 1, t / (1 + t)) \leq
\gamma$.  Indeed, the natural bijection between~$U_{x, t}$ and~$S^{n-2}$
maps pairs of points with inner product~$t$ to pairs of points with inner
product~$t / (1+t)$, and so~$TI \cap U_{x, t}$ is mapped to a subset
of~$S^{n-2}$ avoiding $(k-1, t/(1+t))$-simplices.



Now fix~$x \in S^{n-1}$ and note that
\[
  \begin{split}
    g(t) = \int_{U_{x,t}} K(x, y)\, d\nu(y) &= \int_{U_{x,t}} \int_{\orto(n)}
    \chi_I(Tx)
    \chi_I(Ty)\, d\mu(T) d\nu(y)\\
    &=\int_{\orto(n)} \chi_I(Tx) \int_{U_{x,t}} \chi_I(Ty)\, d\nu(y) d\mu(T)\\
    &\leq \gamma \omega(I),
  \end{split}
\]
whence~$f(t) \leq \gamma$, and we are done.
\end{proof}

One obvious choice for~$\gamma$ in Problem~\eqref{eq:general-sphere-nplus}
is the bound given by the same problem for $(k-1, t/(1+t))$-simplices.  The
base for the recursion is~$k=2$: then we need an upper bound for the
measure of a set of points on the sphere that avoids pairs of points with a
fixed inner product.  Such a bound was given by Bachoc, Nebe, Oliveira, and
Vallentin~\cite{BachocNOV2009} and looks very similar
to~\eqref{eq:general-sphere-nplus}.  They show that, for~$n \geq 2$
and~$t \in [-1, 1)$, the optimal value of the following optimization
problem is an upper bound for~$\mparam(S^{n-1}, 2, t)$:
\begin{equation} 
  \label{eq:sphere-theta}
  \optprob{\sup&\int_{S^{n-1}} \int_{S^{n-1}} f(x\cdot y)\,
    d\omega(y) d\omega(x)\\[1ex]
    &f(1) = 1,\\
    &f(t) = 0,\\
    &\text{$f \in C([-1, 1])$ is a function of positive type
      for~$S^{n-1}$.}
  }
\end{equation}

Let~$\vartheta(S^{n-1}, 2, t)$ denote the optimal value of the optimization
problem above, so~$\vartheta(S^{n-1}, 2, t) \geq \mparam(S^{n-1}, 2, t)$.
For~$k \geq 3$ and~$t \in [-1/(k-1), 1)$, let~$\vartheta(S^{n-1}, k, t)$ be
the optimal value of Problem~\eqref{eq:general-sphere-nplus}
when~$\gamma = \vartheta(S^{n-2}, k-1, t/(1+t))$.  We then
have~$\vartheta(S^{n-1}, k, t) \geq \mparam(S^{n-1}, k, t)$.

There is actually a simple analytical expression
for~$\vartheta(S^{n-1}, k, t)$, as we see now.  For~$n \geq 2$
and~$j \geq 0$, let~$\unijac{n}{j}$ denote the Jacobi polynomial with
parameters~$\alpha = \beta = (n-3)/2$ and degree~$j$, normalized
so~$\unijac{n}{j}(1) = 1$ (for background on Jacobi polynomials, see the
book by Szegö~\cite{Szego1975}).

In Theorem~6.2 of Bachoc, Nebe, Oliveira,
and Vallentin~\cite{BachocNOV2009} it is shown that for
every~$t \in [-1, 1)$ there is some~$j \geq 0$ such
that~$\unijac{n}{j}(t) < 0$.  Theorem~8.21.8 in the book by
Szegö~\cite{Szego1975} implies that, for every~$t \in (-1, 1)$,
\begin{equation}
  \label{eq:jac-limit}
  \lim_{j\to\infty} \unijac{n}{j}(t) = 0.
\end{equation}
Hence, for every~$t \in (-1, 1)$ we can define
\begin{equation}
  \label{eq:mnt}
  M_n(t) = \min\{\, \unijac{n}{j}(t) : j \geq 0\,\},
\end{equation}
and we see that~$M_n(t) < 0$.  With this we
have~\cite[Theorem~6.2]{BachocNOV2009}
\begin{equation}
  \label{eq:theta-optimal}
  \vartheta(S^{n-1}, 2, t) = \frac{-M_n(t)}{1 - M_n(t)}.
\end{equation}
The expression for~$\vartheta(S^{n-1}, k, t)$ is very similar.

\begin{theorem}
  \label{thm:theta-k}
  If~$n \geq k \geq 3$ and if~$t \in [-1/(k-1), 1)$, then
  \begin{equation}
    \label{eq:sphere-theta-expr}
    \vartheta(S^{n-1}, k, t) = \frac{\vartheta(S^{n-2}, k-1, t/(1+t)) -
      M_n(t)}{1-M_n(t)}.
  \end{equation}
\end{theorem}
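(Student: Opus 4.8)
The plan is to prove the identity by establishing two inequalities, exploiting the fact that Problem~\eqref{eq:general-sphere-nplus} with $\gamma = \vartheta(S^{n-2}, k-1, t/(1+t))$ has essentially the same structure as Problem~\eqref{eq:sphere-theta}, whose optimal value is given by~\eqref{eq:theta-optimal}. The key observation is that a function $f \in C([-1,1])$ of positive type for $S^{n-1}$ admits an expansion $f = \sum_{j \geq 0} \hat f_j \unijac{n}{j}$ with all $\hat f_j \geq 0$ and $\sum_j \hat f_j = f(1)$ converging absolutely, and that $\int_{S^{n-1}}\int_{S^{n-1}} f(x\cdot y)\, d\omega(y)d\omega(x) = \hat f_0$, the coefficient of the constant polynomial. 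So after normalizing $f(1)=1$, the problem becomes: maximize $\hat f_0$ subject to $\hat f_j \geq 0$, $\sum_j \hat f_j = 1$, and $\sum_j \hat f_j \unijac{n}{j}(t) \leq \gamma$. Write $\beta := \vartheta(S^{n-2}, k-1, t/(1+t))$ for brevity, and recall $\beta \in [0,1)$ and $M_n(t) < 0$.

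First I would prove that the right-hand side of~\eqref{eq:sphere-theta-expr}, call it $v := (\beta - M_n(t))/(1 - M_n(t))$, is a lower bound for $\vartheta(S^{n-1},k,t)$ by exhibiting a feasible $f$ achieving objective value $v$. Choose an index $j^*$ with $\unijac{n}{j^*}(t) = M_n(t)$ and set $f = (1-\lambda)\unijac{n}{0} + \lambda \unijac{n}{j^*}$ for the weight $\lambda \in [0,1]$ making $f(t) = (1-\lambda) + \lambda M_n(t) = \beta$; solving gives $\lambda = (1-\beta)/(1-M_n(t))$, which lies in $[0,1)$ since $0 \le \beta < 1$ and $M_n(t) < 0$. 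This $f$ is of positive type for $S^{n-1}$ (nonnegative combination of $\unijac{n}{j}$'s), satisfies $f(1) = 1$ and $f(t) = \beta \leq \gamma$, and its objective value is $\hat f_0 = 1 - \lambda = (\beta - M_n(t))/(1-M_n(t)) = v$. Hence $\vartheta(S^{n-1},k,t) \ge v$.

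Next I would prove the reverse inequality: every feasible $f$ has $\hat f_0 \leq v$. Given feasible $f$, write $\hat f_0 = 1 - \sum_{j \geq 1}\hat f_j$. The constraint $\sum_{j\ge 0}\hat f_j \unijac{n}{j}(t) \leq \beta$ rearranges (using $\unijac{n}{0}(t)=1$) to $\hat f_0 + \sum_{j \ge 1}\hat f_j\unijac{n}{j}(t) \le \beta$, i.e. $\hat f_0 \le \beta - \sum_{j\ge1}\hat f_j\unijac{n}{j}(t) \le \beta - M_n(t)\sum_{j\ge1}\hat f_j = \beta - M_n(t)(1-\hat f_0)$, where I used $\unijac{n}{j}(t) \ge M_n(t)$ for all $j$ and $\hat f_j \ge 0$. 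Solving $\hat f_0 \le \beta - M_n(t)(1-\hat f_0)$ for $\hat f_0$, and using $1 - M_n(t) > 0$, gives $\hat f_0(1 - M_n(t)) \le \beta - M_n(t)$, that is $\hat f_0 \le v$. Taking the supremum over feasible $f$ yields $\vartheta(S^{n-1},k,t) \le v$, completing the proof.

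The main subtlety — not a deep obstacle but the point requiring care — is justifying the spectral reformulation: that the supremum in~\eqref{eq:general-sphere-nplus} is attained (or at least approached) by finite nonnegative combinations of the $\unijac{n}{j}$, so that the ``rearrange the linear constraint'' manipulation above is legitimate for all feasible $f$, including those with infinitely many nonzero coefficients. This is handled exactly as in~\cite[Theorem~6.2]{BachocNOV2009}: a continuous function of positive type for $S^{n-1}$ has an absolutely convergent ultraspherical expansion with nonnegative coefficients summing to $f(1)$, the double integral picks out $\hat f_0$, and evaluation at $t$ is given by the (absolutely convergent) series $\sum_j \hat f_j \unijac{n}{j}(t)$; the bound $\unijac{n}{j}(t) \ge M_n(t)$ applies termwise, so the inequality chain above goes through verbatim. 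One should also note $\gamma = \beta$ is used only through $f(t) \le \gamma$, and since the extremal $f$ constructed above already satisfies $f(t) = \beta$, the value of $\vartheta$ does not change if $\gamma$ were taken larger; this matches the recursive definition in the excerpt.
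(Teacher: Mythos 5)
Your proof is correct and follows essentially the same route as the paper: apply Schoenberg's characterization to reduce~\eqref{eq:general-sphere-nplus} to a linear program over the nonnegative coefficients $(\hat f_j)$, then show the optimum concentrates all weight on $j=0$ and the index $j^*$ attaining $M_n(t)$. The one difference is that the paper merely asserts (``the best way to do so is to concentrate all the weight'') what you justify explicitly by the termwise bound $\unijac{n}{j}(t) \geq M_n(t)$ and the resulting inequality $\hat f_0 (1-M_n(t)) \leq \beta - M_n(t)$, which is a welcome sharpening of the argument rather than a different one.
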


The proof requires the following characterization of functions of positive
type due to Schoenberg~\cite{Schoenberg1942}: a
function~$f\colon [-1, 1] \to \R$ is continuous and of positive type
for~$S^{n-1}$ if and only if there are nonnegative numbers~$f_0$, $f_1$,
\dots\ such that~$\sum_{j=0}^\infty f_j < \infty$ and
\begin{equation}
  \label{eq:schoenberg-sphere}
  f(t) = \sum_{j=0}^\infty f_j \unijac{n}{j}(t),
\end{equation}
with uniform convergence in~$[-1, 1]$.

\begin{proof}[Proof of Theorem \ref{thm:theta-k}]
The orthogonality of the Jacobi polynomials~$\unijac{n}{j}$ implies in
particular that, if~$j \geq 1$, then
\[
  \int_{S^{n-1}}\int_{S^{n-1}} \unijac{n}{j}(x\cdot y)\, d\omega(y)
  d\omega(x) = 0.
\]
Use this and Schoenberg's characterization of positive type functions to
rewrite~\eqref{eq:general-sphere-nplus}
with~$\gamma = \vartheta(S^{n-2}, k-1, t/(1+t))$, obtaining the equivalent
problem
\[
  \optprob{\sup&f_0\\
    &\sum_{j=0}^\infty f_j = 1,\\[1ex]
    &\sum_{j=0}^\infty f_j \unijac{n}{j}(t) \leq \vartheta(S^{n-2}, k-1,
    t/(1+t)),\\[1ex]
    &\text{$f_j \geq 0$ for all~$j \geq 0$.}
  }
\]

To solve this problem, note that
\[
  \sum_{j=0}^\infty f_j \unijac{n}{j}(t)
\]
is a convex combination of the numbers~$\unijac{n}{j}(t)$.  We want to keep
this convex combination below~$\vartheta(S^{n-2}, k-1, t/(1+t))$ while
maximizing~$f_0$.  The best way to do so is to concentrate all the weight
of the combination on~$f_0$ and~$f_{j^*}$, where~$j^*$ is such
that~$\unijac{n}{j^*}(t)$ is the most negative number appearing in the
convex combination, that is,~$\unijac{n}{j^*}(t) = M_n(t)$.  Now solve the
problem using only the variables~$f_0$ and~$f_{j^*}$ to get the optimal
value as given in the statement of the theorem.
\end{proof}

The expression for~$\vartheta(S^{n-1}, k, 0)$ is particularly simple.
Indeed, for~$n \geq 2$ it follows from the recurrence relation for the
Jacobi polynomials that~$M_n(0) = \unijac{n}{2}(0) = -1/(n-1)$, whence
\[
  \vartheta(S^{n-1}, k, 0) = (k-1)/n.
\]

Figure~\ref{fig:sphere-bounds} shows the behavior
of~$\vartheta(S^{n-1}, 3, t)$ for a few values of~$n$ as~$t$ changes.
Plots for~$k > 3$ are very similar.

\begin{figure}[tb]
  \centerfloat
  \includegraphics{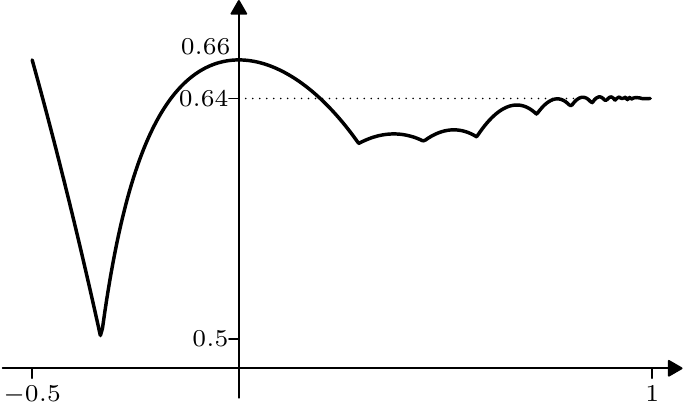}\qquad\includegraphics{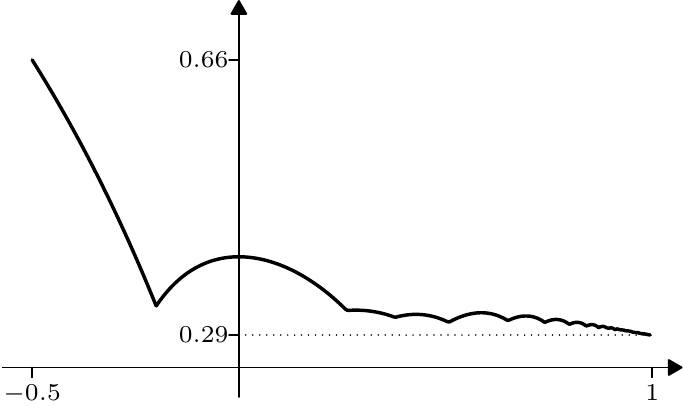}

  \caption{Plots of~$\vartheta(S^{n-1}, 3, t)$ for~$t \in [-0.5,1]$ and
    $n = 3$ (left) and~$5$ (right).}
  \label{fig:sphere-bounds}
\end{figure}


\section{Simplex-avoiding sets in Euclidean space}
\label{sec:Euclidean}

An optimization problem similar to~\eqref{eq:general-sphere-nplus} provides
an upper bound for~$\mparam(\R^n, k)$.  To introduce it, we need some
definitions and facts from harmonic analysis on~$\R^n$; for background, see
e.g.\ the book by Reed and Simon~\cite{ReedS1975}.

A continuous function~$f\colon\R^n \to \R$ is of \defi{positive type} if
for every finite set~$U \subseteq \R^n$ the matrix
$\bigl(f(x-y)\bigr)_{x,y \in U}$ is positive semidefinite.  Such a
function~$f$ has a well-defined \defi{mean value}
\[
  M(f) = \lim_{T \to \infty} \frac{1}{\vol [-T,T]^n} \int_{[-T, T]^n}
  f(x)\, dx.
\]
We say that a function~$f\colon\R^n \to \R$ is \defi{radial} if~$f(x)$
depends only on~$\|x\|$.  In this case, for~$t \geq 0$ we denote by~$f(t)$ the
common value of~$f$ for vectors of norm~$t$.

Fix~$n \geq 2$ and~$k \geq 3$ such that~$k \leq n + 1$.  For every~$\gamma
\geq 0$, consider the optimization problem
\begin{equation}
  \label{eq:general-euclidean-nplus}
  \optprob{\sup&M(f)\\
    &f(0) = 1,\\
    &f(1) \leq \gamma,\\
    &\text{$f\colon \R^n \to \R$ is continuous, radial, and of positive
      type.}
  }
\end{equation}
We have the analogue of Theorem~\ref{thm:sphere-nplus}:

\begin{theorem} 
  \label{thm:Rnanalog}
  Fix~$n \geq 2$ and~$k \geq 3$ such that~$k \leq n + 1$.  If~$\gamma \geq
  \mparam(S^{n-1}, k-1, 1/2)$, then the optimal value
  of~\eqref{eq:general-euclidean-nplus}  is an upper bound
  for~$\mparam(\R^n, k)$.
\end{theorem}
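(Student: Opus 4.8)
The plan is to mirror the proof of Theorem~\ref{thm:sphere-nplus}, replacing the averaging over~$\orto(n)$ by an averaging over a large flat torus (which does carry an invariant probability measure) and then transferring the resulting function back to~$\R^n$ and symmetrizing it at the very end.

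First I would fix a measurable set $I \subseteq \R^n$ avoiding unit $k$-simplices with $\overline{\delta}(I) > 0$ and a small $\varepsilon > 0$, and then choose $T$ large enough that the density of $I$ inside the slightly shrunken cube $[-T+2, T-2]^n$ is at least $(\overline{\delta}(I) - \varepsilon)\vol[-T, T]^n$. Let $V = \R^n/(2T\Z^n)$ be the flat torus with Haar probability measure~$\nu$, and let $J \subseteq V$ be the image of $I \cap [-T+2, T-2]^n$, so that $\nu(J) \geq \overline{\delta}(I) - \varepsilon$. A short bookkeeping argument — using that a unit $k$-simplex has diameter~$1$, so that for $T$ large any $k$ points of $J$ at pairwise torus-distance~$1$ lift to $k$ points of $I \cap [-T, T]^n$ at pairwise Euclidean distance~$1$ — shows that $J$, and hence every translate of~$J$, avoids unit $k$-simplices in~$V$. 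Next I would average over the translation action of $V$ on itself, forming the kernel
\[
  K(x, y) = \int_V \chi_J(\sigma + x)\chi_J(\sigma + y)\, d\nu(\sigma).
\]
By Lemma~\ref{lem:averaging}, applied with the torus $V$ in the role of both the space and the group, this $K$ is continuous; it is clearly positive and translation-invariant, so $K(x, y) = h(x - y)$ for a continuous function $h$ of positive type on~$V$ with $h(0) = \nu(J)$ and $\int_V h\, d\nu = \nu(J)^2$.

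The core estimate is the torus analogue of the spherical-cap bound in the proof of Theorem~\ref{thm:sphere-nplus}. Fixing $x \in V$, the set $U_x$ of points at torus-distance~$1$ from~$x$ is, for $T$ large, isometric to the unit sphere $S^{n-1}$, and two points of $U_x$ at torus-distance~$1$ correspond under this isometry to a pair of points of $S^{n-1}$ with inner product~$1/2$. If $x \in -\sigma + J$, then since $-\sigma + J$ avoids unit $k$-simplices the set $(-\sigma + J) \cap U_x$ cannot contain $k - 1$ points pairwise at torus-distance~$1$ — together with $x$ they would form a unit $k$-simplex — so it maps to a subset of $S^{n-1}$ avoiding $(k-1, 1/2)$-simplices; here the hypothesis $3 \leq k \leq n+1$ is exactly what makes $\mparam(S^{n-1}, k-1, 1/2)$ meaningful. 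Its normalized surface measure is therefore at most $\mparam(S^{n-1}, k-1, 1/2) \leq \gamma$. Integrating $K(x, \cdot)$ over $U_x$ and interchanging the order of integration exactly as in the proof of Theorem~\ref{thm:sphere-nplus} yields $h(v) \leq \gamma\,\nu(J)$ for every $v$ at torus-distance~$1$ from~$0$. Lifting $h$ to a $2T\Z^n$-periodic continuous function $\tilde{h}$ on~$\R^n$, this reads $\tilde{h}(v) \leq \gamma\,\tilde{h}(0)$ for all $\|v\| = 1$, while $\tilde h(0) = \nu(J)$ and $M(\tilde{h}) = \int_V h\, d\nu = \nu(J)^2$.

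It remains to normalize and symmetrize. Set $f = \tilde{h}/\tilde{h}(0)$: it is continuous and of positive type and satisfies $f(0) = 1$, $f(v) \leq \gamma$ for $\|v\| = 1$, and $M(f) = \nu(J)$, although it need not be radial. Averaging over $\orto(n)$ gives $f^\circ(x) = \int_{\orto(n)} f(Rx)\, d\mu(R)$, which is again continuous and of positive type (it is an average of the positive-type kernels $(x,y) \mapsto f(R(x-y))$), is radial by construction, has $f^\circ(0) = 1$ and $f^\circ(v) \leq \gamma$ for $\|v\| = 1$ since each $\|Rv\| = 1$, and — using the rotation-invariance of the mean value of a positive-type function together with a dominated-convergence argument, all functions being bounded by~$1$ — satisfies $M(f^\circ) = M(f) = \nu(J)$. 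Thus $f^\circ$ is feasible for~\eqref{eq:general-euclidean-nplus} with objective value $\nu(J) \geq \overline{\delta}(I) - \varepsilon$; letting $\varepsilon \to 0$ and then taking the supremum over all such~$I$ shows that the optimal value of~\eqref{eq:general-euclidean-nplus} is at least $\mparam(\R^n, k)$. I expect the main obstacle to be the bookkeeping in the passage between $\R^n$ and the torus: making precise that $J$ and its translates avoid unit $k$-simplices in~$V$ despite possible wrap-around, that $U_x$ is genuinely isometric to $S^{n-1}$ with the stated inner-product correspondence, and that the mean value $M$ interacts correctly both with periodization and with averaging over~$\orto(n)$. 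Each of these is routine but needs care; everything else is a faithful transcription of the proof of Theorem~\ref{thm:sphere-nplus}.
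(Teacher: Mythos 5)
Your construction is essentially the paper's proof --- the paper likewise reduces to a periodic set (pasting $I\cap[-R,R]^n$ back with gaps of width~$1$), forms the autocorrelation over the torus $\R^n/\Lambda$ using Lemma~\ref{lem:averaging}, radializes over $\orto(n)$, and bounds the value at unit distance via $\mparam(S^{n-1},k-1,1/2)$ --- but one step of your transcription is wrong. You claim that integrating $K(x,\cdot)$ over $U_x$ ``exactly as in the proof of Theorem~\ref{thm:sphere-nplus}'' yields the pointwise bound $h(v)\le\gamma\,\nu(J)$ for \emph{every} $v$ of torus-norm~$1$. In the spherical proof that collapse works because the kernel has already been averaged over $\orto(n)$, so $K(x,y)=g(x\cdot y)$ is constant on $U_{x,t}$ and the integral over $U_{x,t}$ equals the single value $g(t)$. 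Your torus kernel is only translation-invariant, not rotation-invariant, so $K(x,y)=h(x-y)$ is not constant on $U_x$, and the interchange of integration gives only the spherical average $\int_{S^{n-1}}h(\xi)\,d\omega(\xi)\le\gamma\,\nu(J)$. The pointwise statement is in fact false in general: for $n=2$, $k=3$, take $I$ to be the union of horizontal strips of height $1/10$ with vertical period~$2$; it avoids unit triangles (the three vertices of a unit triangle are pairwise within vertical distance~$1$, hence would all lie in one strip, but a unit equilateral triangle has width $\sqrt{3}/2>1/10$ in every direction), yet it is invariant under horizontal translations, so $h(e_1)=\nu(J)>\gamma\,\nu(J)$ whenever $\gamma<1$, which is the only nontrivial case.

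The flaw is local, and your own final step already contains the repair: all you actually use is $f^\circ(v)\le\gamma$ for $\|v\|=1$, and since the pushforward of the Haar measure on $\orto(n)$ under $R\mapsto Rv$ is $\omega$, you have $f^\circ(v)=\nu(J)^{-1}\int_{S^{n-1}}h(\eta)\,d\omega(\eta)$, which is exactly the quantity your Fubini computation does bound by $\gamma$. Equivalently, radialize \emph{before} invoking simplex-avoidance of the unit-sphere slices, as the paper does; then the unit-distance value of the radialized function is by definition this spherical average and the estimate goes through verbatim. With this reordering (and the routine checks you already flag concerning wrap-around, periodization, and $M(f^\circ)=M(f)$), your argument coincides with the paper's proof.
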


We need a few facts about periodic sets and functions.  A
set~$X \subseteq \R^n$ is \defi{periodic} if it is invariant under some
lattice~$\Lambda$, that is, if~$X + v = X$ for all~$v \in \Lambda$.
Similarly, a function~$f\colon\R^n \to \R$ is \defi{periodic} if there is a
lattice~$\Lambda$ such that~$f(x+v) = f(x)$ for all~$v \in \Lambda$.  We
say that~$\Lambda$ is a \defi{periodicity lattice} of~$X$ or~$f$.  A
periodic function~$f$ with periodicity lattice~$\Lambda$ can be seen as a
function on the torus~$\R^n / \Lambda$; its mean value is
\[
  \frac{1}{\vol(\R^n/\Lambda)} \int_{\R^n/\Lambda} f(x)\, dx.
\]

\begin{proof}[Proof of Theorem \ref{thm:Rnanalog}]
Let~$I \subseteq \R^n$ be a measurable set of positive upper density
avoiding unit $k$-simplices.  The first step is to see that we can assume
that~$I$ is periodic.  Indeed, fix~$R > 1/2$.  Erase a border of
width~$1/2$ around $I \cap [-R, R]^n$ and paste the resulting set
periodically in such a way that there is an empty gap of width~$1$ between
any two pasted copies.  The resulting periodic set still avoids unit
$k$-simplices and is measurable.  Its upper density is
\[
  \frac{\vol(I \cap [-R+1/2, R-1/2]^n)}{\vol [-R,R]^n};
\]
by taking~$R$ large enough, we can make this density as close as we want to
the upper density of~$I$.

Assume then that~$I$ is periodic, so its characteristic function~$\chi_I$
is also periodic; let~$\Lambda$ be a periodicity lattice of~$I$.  Set
\[
  g(x) = \frac{1}{\vol(\R^n / \Lambda)} \int_{\R^n / \Lambda} \chi_I(y)
  \chi_I(x + y)\, dy.
\]
Lemma~\ref{lem:averaging} with~$V = \Gamma = \R^n / \Lambda$ applied
to~$\chi_I$ implies that~$g$ is continuous.  Direct verification yields
that~$g$ is of positive type,~$g(0) = \overline{\delta}(I)$,
and~$M(g) = \overline{\delta}(I)^2$.

Now set
\[
  f(x) = \overline{\delta}(I)^{-1} \int_{\orto(n)} g(Tx)\, d\mu(T),
\]
where~$\mu$ is the Haar measure on~$\orto(n)$.  The function~$f$ is
continuous, radial, and of positive type.  Moreover,~$f(0) = 1$
and~$M(f) = \overline{\delta}(I)$.  If we show that~$f(1) \leq \gamma$,
then~$f$ is a feasible solution of~\eqref{eq:general-euclidean-nplus}
with~$M(f) = \overline{\delta}(I)$, and so the theorem will follow.

To see that~$f(1) \leq \gamma$, note that if~$x$ is a point of a unit
$k$-simplex in~$\R^n$, then all the other points in the simplex lie on the
unit sphere~$x + S^{n-1}$ centered at~$x$.  Hence if~$x \in I$,
then~$I \cap (x + S^{n-1})$ is a measurable subset of~$x + S^{n-1}$ that
avoids $(k - 1, 1/2)$-simplices, and so the measure
of~$I \cap (x + S^{n-1})$ as a subset of the unit sphere is at
most~$\mparam(S^{n-1}, k-1, 1/2)$.  Hence if~$\xi \in \R^n$ is any unit
vector, then
\[
  \begin{split}
    f(1) &= \overline{\delta}(I)^{-1} \int_{\orto(n)} g(T\xi)\, d\mu(T)\\
    &=\overline{\delta}(I)^{-1} \int_{\orto(n)} \frac{1}{\vol(\R^n / \Lambda)}
    \int_{\R^n / \Lambda} \chi_I(x) \chi_I(T\xi + x)\, dx d\mu(T)\\
    &=\overline{\delta}(I)^{-1} \frac{1}{\vol(\R^n / \Lambda)} \int_{\R^n /
      \Lambda} \chi_I(x) \int_{\orto(n)} \chi_I(T\xi + x)\, d\mu(T)
    dx\\
    &\leq \mparam(S^{n-1}, k-1, 1/2) \leq \gamma,
  \end{split}
\]
as we wanted.
\end{proof}

Denote by~$\vartheta(\R^n, k)$ the optimal value
of~\eqref{eq:general-euclidean-nplus} when
setting~$\gamma = \vartheta(S^{n-1}, k-1, 1/2)$.
Then~$\vartheta(\R^n, k) \geq \mparam(\R^n, k)$.

An expression akin to the one for~$\vartheta(S^{n-1}, k, t)$ can be derived
for~$\vartheta(\R^n, k)$.  For~$n \geq 2$, let
\[
  \Omega_n(0) = 1\qquad\text{and}\qquad\Omega_n(u) = \Gamma(n/2)
  (2/u)^{(n-2)/2} J_{(n-2)/2}(u)\quad\text{for~$u > 0$},
\]
where~$J_\alpha$ is the Bessel function of the first kind with
parameter~$\alpha$.  Let~$m_n$ be the global minimum of~$\Omega_n$, which is
a negative number (cf.~Oliveira and Vallentin~\cite{OliveiraV2010}).  The
following theorem is the analogue of Theorem~\ref{thm:theta-k}.

\begin{theorem}
  \label{thm:theta-rn-k}
  For~$n \geq 2$ we have
  \[
    \vartheta(\R^n, k) = \frac{\vartheta(S^{n-1}, k-1, 1/2) - m_n}{1-m_n}.
  \]
\end{theorem}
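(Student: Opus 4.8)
The plan is to run exactly the argument used to prove Theorem~\ref{thm:theta-k}, with Schoenberg's characterization of positive-type functions on the sphere replaced by its Euclidean analogue. By Bochner's theorem together with the classical description of radial functions of positive type on~$\R^n$ (see e.g.~\cite{ReedS1975}, or~\cite{Schoenberg1942} for the sphere counterpart), a function $f\colon\R^n\to\R$ is continuous, radial, and of positive type if and only if there is a finite positive Borel measure~$\mu$ on $[0,\infty)$ with
\[
  f(x) = \int_{[0,\infty)} \Omega_n(u\|x\|)\, d\mu(u),
\]
and then $f(0) = \mu([0,\infty))$. First I would substitute this representation into problem~\eqref{eq:general-euclidean-nplus} with $\gamma = \vartheta(S^{n-1}, k-1, 1/2)$ and show it is equivalent to
\[
  \optprob{\sup & \mu(\{0\})\\
    & \mu([0,\infty)) = 1,\\[1ex]
    & \int_{[0,\infty)} \Omega_n(u)\, d\mu(u) \leq \vartheta(S^{n-1}, k-1, 1/2),\\[1ex]
    & \text{$\mu$ is a positive Borel measure on $[0,\infty)$.}}
\]
The constraint $f(0) = 1$ becomes $\mu([0,\infty)) = 1$ because $\Omega_n(0) = 1$, and $f(1) = \int_{[0,\infty)} \Omega_n(u)\, d\mu(u)$ by evaluating the representation at any unit vector.

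The one genuinely new ingredient is the identity $M(f) = \mu(\{0\})$. To get it I would first compute $M\bigl(\Omega_n(u\|\cdot\|)\bigr)$: for $u = 0$ this is $1$ since $\Omega_n \equiv 1$ there, and for $u > 0$ it is $0$ because the classical decay estimate $\Omega_n(v) = O\bigl(v^{-(n-1)/2}\bigr)$ as $v \to \infty$ makes the average of $\Omega_n(u\|x\|)$ over $[-T,T]^n$ vanish as $T\to\infty$. Then, using that $f \mapsto M(f)$ is linear on positive-type functions and commutes with the integral against the finite measure~$\mu$ (a dominated-convergence argument, $\Omega_n$ being bounded), one obtains $M(f) = \int_{[0,\infty)} M\bigl(\Omega_n(u\|\cdot\|)\bigr)\, d\mu(u) = \mu(\{0\})$.

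It then remains to solve the resulting linear program, which goes exactly as in Theorem~\ref{thm:theta-k}. Put $w = \mu(\{0\})$ and write $\mu = w\,\delta_0 + \mu'$ with $\mu'(\{0\}) = 0$ and $\mu'([0,\infty)) = 1 - w$. Since $\Omega_n \geq m_n$ everywhere, $\int \Omega_n\, d\mu = w + \int \Omega_n\, d\mu' \geq w + (1-w)m_n$, so feasibility forces $w(1-m_n) \leq \vartheta(S^{n-1}, k-1, 1/2) - m_n$. Conversely, $m_n < 0$ is attained at some $u^* > 0$ (as $\Omega_n$ is continuous, equals~$1$ at~$0$, is somewhere negative, and decays to~$0$ at infinity, cf.~\cite{OliveiraV2010}), and the two-point measure $\mu = w\,\delta_0 + (1-w)\,\delta_{u^*}$ with $w = (\vartheta(S^{n-1}, k-1, 1/2) - m_n)/(1 - m_n)$ lies in $[0,1]$ — using $\vartheta(S^{n-1}, k-1, 1/2) \leq 1$, an easy consequence of~\eqref{eq:sphere-theta-expr} — is feasible and attains this value. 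Hence the optimum equals $(\vartheta(S^{n-1}, k-1, 1/2) - m_n)/(1 - m_n)$.

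The main obstacle is the middle step: pinning down $M(f)$ as the point mass $\mu(\{0\})$ of the spectral measure, i.e.\ the harmonic-analysis bookkeeping (the decay estimate for~$\Omega_n$ and the interchange of the mean value with the Bochner integral), together with checking carefully that passing between $f$ and $\mu$ is a bijection between feasible points of the two problems so that the suprema agree. Once this is in place, the optimization itself is the same elementary argument as for Theorem~\ref{thm:theta-k}.
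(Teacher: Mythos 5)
Your proof is correct and follows the same route as the paper: pass to Schoenberg's representation of radial positive-type functions, rewrite~\eqref{eq:general-euclidean-nplus} as a linear program over finite Borel measures on $[0,\infty)$, and observe that the optimum is attained by a two-point measure supported at $0$ and at a global minimizer of $\Omega_n$. The only difference is that you supply the harmonic-analysis argument for $M(f)=\mu(\{0\})$ (decay of $\Omega_n$ plus dominated convergence) and spell out the LP solution, where the paper cites DeCorte, Oliveira, and Vallentin~\cite{DeCorteOV2020} and refers back to the proof of Theorem~\ref{thm:theta-k}.
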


The proof uses again a theorem of Schoenberg~\cite{Schoenberg1938}, that
this time characterizes radial and continuous functions of positive type
on~$\R^n$: these are the functions~$f\colon\R^n \to \R$ such that
\begin{equation}
  \label{eq:schoenberg-rn}
  f(x) = \int_0^\infty \Omega_n(z\|x\|)\, d\nu(z)
\end{equation}
for some finite Borel measure~$\nu$.

\begin{proof}
If~$f$ is given as in~\eqref{eq:schoenberg-rn}, then~$M(f) = \nu(\{0\})$
(see e.g.~\S6.2 in DeCorte, Oliveira, and Vallentin~\cite{DeCorteOV2020}).
Using Schoenberg's theorem, we can
rewrite~\eqref{eq:general-euclidean-nplus} (with
$\gamma = \vartheta(S^{n-1}, k-1, 1/2)$) equivalently as:
\[
  \optprob{\sup&\nu(\{0\})\\
    &\nu([0, \infty)) = 1,\\[1ex]
    &\int_0^\infty \Omega_n(z)\, d\nu(z) \leq \vartheta(S^{n-1}, k-1,
    1/2),\\[1ex]
    &\text{$\nu$ is a Borel measure.}
  }
\]

We are now in the same situation as in the proof of
Theorem~\ref{thm:theta-k}.  If~$z^*$ is such that~$m_n = \Omega_n(z^*)$,
then the optimal~$\nu$ is supported at~$0$ and~$z^*$.  Solving the
resulting system yields the theorem.
\end{proof}

Table~\ref{tab:Rn-values} contains some values for~$\vartheta(\R^n, k)$.

\begin{table}[tb]
  \footnotesize
  \centerfloat
  \begin{tabular}{cccccccccc}
    $n$ / $k$&3&4&5&6&7&8&9&10&11\\
    2&0.64355&---&---&---&---&---&---&---&---\\
    3&0.42849&0.69138&---&---&---&---&---&---&---\\
    4&0.29346&0.49798&0.73225&---&---&---&---&---&---\\
    5&0.20374&0.36768&0.55035&0.76580&---&---&---&---&---\\
    6&0.15225&0.28471&0.42777&0.60262&0.79563&---&---&---&---\\
    7&0.11866&0.22740&0.34071&0.48493&0.64681&0.81972&---&---&---\\
    8&0.09339&0.18405&0.27471&0.39559&0.53374&0.68268&0.83882&---&---\\
    9&0.07387&0.15030&0.22864&0.33042&0.44903&0.57816&0.71431&0.85537&---\\
    10&0.05846&0.12340&0.19194&0.27851&0.38158&0.49496&0.61521&0.74026&0.86882
  \end{tabular}
  \bigskip

  \caption{The bound~$\vartheta(\R^n, k)$ for~$n = 2$, \dots,~$10$ and~$k =
    3$, \dots,~$11$, with values of~$n$ on each row and of~$k$ on each
    column.}
  \label{tab:Rn-values}
\end{table}


\section{Exponential density decay}
\label{sec:decay}

In this section we analyze the asymptotic behavior
of~$\vartheta(S^{n-1}, k, t)$ and~$\vartheta(\R^n, k)$ as functions of~$n$,
proving Theorem~\ref{thm:main-decay}.

The main step in our analysis is to understand the asymptotic behavior
of
\[
  M_n(t) = \min\{\, \unijac{n}{j}(t) : j \geq 0\,\},
\]
as defined in~\eqref{eq:mnt}.  For~$t \in [-1, 0)$ we
have~$M_n(t) \leq \unijac{n}{1}(t) = t$, and so~$M_n(t)$ does not
approach~$0$.  We have seen in~\S\ref{sec:sphere} that~$M_n(0) = -1/(n-1)$,
so for~$t=0$ we have that~$M_n(t)$ approaches~$0$ linearly fast as~$n$
grows.  Things get interesting when~$t \in (0, 1)$: then~$M_n(t)$
approaches~$0$ exponentially fast as~$n$ grows.

\begin{theorem}
  \label{thm:mnt-asymptotic}
  For every~$t \in (0, 1)$ there is~$c \in (0, 1)$ such
  that~$|M_n(t)| \leq (c +\nobreak o(1))^n$.
\end{theorem}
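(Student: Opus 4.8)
The plan is to combine a crude but uniform size bound for the polynomials $\unijac{n}{j}$ of high degree with the observation that the $\unijac{n}{j}$ of low degree are strictly positive at~$t$. Since $M_n(t)<0$ and the minimum defining it is attained (because $\unijac{n}{j}(t)\to 0$ by~\eqref{eq:jac-limit}), it must be attained at some high degree $j_0$, where the size bound can be applied to $\unijac{n}{j_0}(t)$.

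Write $t=\cos\theta$ with $\theta\in(0,\pi/2)$. The starting point is the classical Laplace-type integral representation of the normalized ultraspherical polynomials (see Szegő~\cite{Szego1975}): for $n\geq 3$ and $j\geq 0$,
\[
  \unijac{n}{j}(\cos\theta)=\kappa_n\int_0^\pi(\cos\theta+i\sin\theta\cos\phi)^j\,(\sin\phi)^{n-3}\,d\phi,\qquad
  \kappa_n=\frac{\Gamma((n-1)/2)}{\Gamma((n-2)/2)\,\Gamma(1/2)}=O(\sqrt n).
\]
First I would estimate the modulus of the integrand. Since $|\cos\theta+i\sin\theta\cos\phi|^2=1-\sin^2\theta\sin^2\phi\leq e^{-\sin^2\theta\sin^2\phi}$ and $(\sin\phi)^{n-3}\leq e^{-\frac{n-3}{2}\cos^2\phi}$, the integrand has absolute value at most $\exp\bigl(-\tfrac12(j\sin^2\theta\sin^2\phi+(n-3)\cos^2\phi)\bigr)$. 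The exponent is affine in $\sin^2\phi\in[0,1]$, hence bounded below by $\min\{j\sin^2\theta,\,n-3\}$, giving the uniform estimate
\[
  |\unijac{n}{j}(t)|\leq\pi\kappa_n\exp\bigl(-\tfrac12\min\{j\sin^2\theta,\,n-3\}\bigr).
\]
In particular, for any fixed $\varepsilon\in(0,1)$ there is $c=c(\varepsilon,\theta)\in(0,1)$ such that $|\unijac{n}{j}(t)|\leq(c+o(1))^n$ for all $j\geq\varepsilon n$.

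The difficulty is that this bound is worthless when $j=o(n)$, since then the exponent is sublinear in~$n$, so one cannot simply bound $|M_n(t)|$ by the maximum of the estimate over all~$j$; I expect this to be the main obstacle. I would resolve it by showing that $\unijac{n}{j}(t)>0$ whenever $j$ is below a small constant multiple of~$n$. For this I would pass to the Liouville normal form of the ultraspherical differential equation: the substitution $\unijac{n}{j}(\cos s)=(\sin s)^{-(n-2)/2}w(s)$ turns it into
\[
  w''+\Bigl[\bigl(j+\tfrac{n-2}{2}\bigr)^2-\frac{(n-2)(n-4)}{4\sin^2 s}\Bigr]w=0.
\]
For $n\geq 5$ the coefficient of $w$ is negative on $(0,s_*)$, where $\sin^2 s_*=(n-2)(n-4)/(2j+n-2)^2$; and since $w(s)\sim s^{(n-2)/2}>0$ as $s\to 0^+$ (because $\unijac{n}{j}(1)=1$), the equation forces $w$ to stay convex, increasing, and hence positive on $(0,s_*]$. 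Therefore $\unijac{n}{j}$ has no zero in $[\cos s_*,1]$, i.e.\ its largest zero is strictly below $\cos s_*$. As $\cos^2 s_*=1-(n-2)(n-4)/(2j+n-2)^2$ is increasing in~$j$, for $j\leq\varepsilon n$ one gets $\cos^2 s_*\leq 4\varepsilon(1+\varepsilon)/(1+2\varepsilon)^2+o(1)$, and the right-hand side is below $t^2$ once $\varepsilon$ is a small enough constant depending on~$t$. (Alternatively, one may invoke a known bound on the extreme zeros of Jacobi polynomials.) Fixing such an $\varepsilon$, it follows that $\unijac{n}{j}(t)>0$ for all $j\leq\varepsilon n$ and all sufficiently large~$n$.

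Putting the pieces together: for $n$ large the minimum $M_n(t)=\min\{\unijac{n}{j}(t):j\geq 0\}$, being negative, is attained at some $j_0>\varepsilon n$, and then the size estimate yields $|M_n(t)|=-\unijac{n}{j_0}(t)\leq|\unijac{n}{j_0}(t)|\leq(c+o(1))^n$, which is the assertion with this value of $c\in(0,1)$.
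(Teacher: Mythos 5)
Your proof is correct, and its skeleton coincides with the paper's: first show that $\unijac{n}{j}(t)\geq 0$ for all $j$ up to a small constant multiple of~$n$, so the (negative) minimum $M_n(t)$ is attained at some degree $j_0=\Omega(n)$; then show that $|\unijac{n}{j}(t)|$ decays exponentially in~$n$ once $j=\Omega(n)$, using the Gegenbauer integral representation. The differences lie in how the two ingredients are executed. For the decay estimate, the paper's Lemma~\ref{lem:jac-decay} splits the integral at $\pi/2\pm\delta$ and keeps the free parameter~$\delta$, which is later tuned in \S\ref{sec:exp-bounds} via~\eqref{eq:best-constant} to extract the explicit constants ($0.95622$ for $t=1/2$, and $1-1/(9k^2)$ in general); your pointwise bound $|F(\phi)|^j(\sin\phi)^{n-3}\leq\exp\bigl(-\tfrac12\min\{j\sin^2\theta,\,n-3\}\bigr)$ is cleaner and uniform in~$\phi$, but yields the weaker base $e^{-\min\{\varepsilon\sin^2\theta,1\}/2}$, so it proves Theorem~\ref{thm:mnt-asymptotic} without directly reproducing the paper's numerical constants. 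For the positivity range, the paper cites the Elbert--Laforgia bound on the largest zero of $C^\lambda_j$, giving nonnegativity up to $j\approx a(t)n$ with $a(t)=((1-t^2)^{-1/2}-1)/2$, whereas you derive a comparable extreme-zero bound from scratch via the Liouville normal form and a Sturm-type convexity argument; this is self-contained (valid for $n\geq 5$, which suffices for an asymptotic statement) and, as you note, could be replaced by a citation. In short: same strategy, a more elementary and self-contained execution on your side, at the price of the sharp explicit constants the paper needs later.
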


We will need the following lemma showing that, for every~$t \in (0, 1)$,
if~$j = \Omega(n)$, then~$|\unijac{n}{j}(t)|$ decays exponentially in~$n$.
Theorem~\ref{thm:mnt-asymptotic} will follow from an application of this
lemma after we show that the minimum in~\eqref{eq:mnt} is attained for
some~$j^* = \Omega(n)$.  The statement of the lemma is quite precise since
we later want to do a more detailed analysis of the base of the
exponential.  The proof is a refinement of the analysis carried out by
Schoenberg~\cite{Schoenberg1942}.

\begin{lemma}
  \label{lem:jac-decay}
  If for~$\theta \in (0, \pi)$ and~$\delta \in (0, \pi/2)$ we write
  \[
    C = (\cos^2\theta + \sin^2\theta \sin^2\delta)^{1/2},
  \]
  then~$|\unijac{n}{j}(\cos\theta)| \leq \pi n^{1/2} \cos^{n-3}\delta +
  C^j$ for all~$n \geq 3$.
\end{lemma}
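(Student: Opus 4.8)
The plan is to exploit the classical Laplace--Gegenbauer integral representation for ultraspherical polynomials, which is exactly the tool Schoenberg used and which we will estimate a little more carefully than the bare asymptotics. Since $\unijac{n}{j}$ is the Jacobi polynomial with $\alpha=\beta=(n-3)/2$ normalized by $\unijac{n}{j}(1)=1$, it coincides with the normalized ultraspherical polynomial of parameter $\lambda=(n-2)/2>0$, and for $n\ge 3$ and $\theta\in(0,\pi)$ one has
\[
  \unijac{n}{j}(\cos\theta) = c_n\int_0^\pi (\cos\theta + i\sin\theta\cos\phi)^j
  (\sin\phi)^{n-3}\,d\phi,\qquad
  c_n = \frac{\Gamma((n-1)/2)}{\sqrt{\pi}\,\Gamma((n-2)/2)}.
\]
(See Szegő~\cite{Szego1975} or Schoenberg~\cite{Schoenberg1942}; note that $c_n$ is precisely the reciprocal of $\int_0^\pi(\sin\phi)^{n-3}\,d\phi = B(\tfrac{n-2}{2},\tfrac12)$, which is the identity that makes the case $j=0$ consistent.) First I would take absolute values inside the integral, using $|\cos\theta+i\sin\theta\cos\phi| = (\cos^2\theta+\sin^2\theta\cos^2\phi)^{1/2}$, to get $|\unijac{n}{j}(\cos\theta)| \le c_n\int_0^\pi (\cos^2\theta+\sin^2\theta\cos^2\phi)^{j/2}(\sin\phi)^{n-3}\,d\phi$.

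Next I would split the domain as $[0,\pi] = A\cup B$ with $A = \{\phi:|\cos\phi|\le\sin\delta\}$ and $B=\{\phi:|\cos\phi|>\sin\delta\}$; equivalently $B = \{\phi:\sin\phi<\cos\delta\}$, whose total Lebesgue measure is $\pi-2\delta<\pi$. On $A$ the base is controlled by $C$, since $\cos^2\theta+\sin^2\theta\cos^2\phi \le \cos^2\theta+\sin^2\theta\sin^2\delta = C^2$, so the contribution of $A$ is at most $C^{j}\cdot c_n\int_0^\pi(\sin\phi)^{n-3}\,d\phi = C^{j}$. On $B$ the base is at most $(\cos^2\theta+\sin^2\theta)^{1/2}=1$ while $(\sin\phi)^{n-3}\le(\cos\delta)^{n-3}$ (here $n-3\ge 0$), so the contribution of $B$ is at most $c_n\pi(\cos\delta)^{n-3}$. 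Adding the two bounds gives $|\unijac{n}{j}(\cos\theta)| \le c_n\pi(\cos\delta)^{n-3} + C^{j}$.

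Finally I would check $c_n\le n^{1/2}$, which completes the proof. This follows from Gautschi's inequality $\Gamma(x+\tfrac12)/\Gamma(x)\le x^{1/2}$ applied with $x=(n-2)/2$, giving $c_n \le \sqrt{(n-2)/(2\pi)} < n^{1/2}$ for all $n\ge 3$. I do not expect any serious obstacle here; the only point needing care is fixing the precise integral representation together with its normalization constant and confirming that the \emph{same} constant $c_n$ both appears in the formula and equals $1/B(\tfrac{n-2}{2},\tfrac12)$ — this is what forces the region-$A$ contribution to be exactly $C^{j}$ with no extraneous factor, which is essential for the stated clean bound. Everything else is elementary splitting and monotonicity estimates.
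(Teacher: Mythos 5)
Your proposal is correct and follows essentially the same route as the paper: the same Gegenbauer integral representation with normalization $c_n = R(n)^{-1}$, the same split of $[0,\pi]$ (your set $A$ is exactly the interval $[\pi/2-\delta,\pi/2+\delta]$ used in the paper, and $B$ is the union of the two outer intervals), and the same bound $c_n < n^{1/2}$ via a Gautschi/Wendel-type Gamma-ratio inequality. No gaps.
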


\begin{proof}
An integral representation for the ultraspherical polynomials due to
Gegenbauer (take~$\lambda = (n - 2)/2$ in Theorem~6.7.4 from Andrews,
Askey, and Roy~\cite{AndrewsAR1999}) gives us the formula
\[
  \unijac{n}{j}(\cos\theta) = R(n)^{-1} \int_0^\pi F(\phi)^j
  \sin^{n-3}\phi\, d\phi,
\]
where
\[
  F(\phi) = \cos\theta + i\sin\theta\cos\phi\qquad\text{and}\qquad
  R(n) = \int_0^\pi \sin^{n-3} \phi\, d\phi.
\]

Note that~$|F(\phi)|^2 = \cos^2\theta + \sin^2\theta \cos^2\phi$ and
that~$|F(\phi)| \leq 1$.  Split the integration domain into the
intervals~$[0, \pi/2-\delta]$, $[\pi/2-\delta, \pi/2+\delta]$,
and~$[\pi/2+\delta, \pi]$ to obtain
\[
  \begin{split}
    |\unijac{n}{j}(\cos\theta)| &\leq R(n)^{-1} \int_0^\pi |F(\phi)|^j
    \sin^{n-3}\phi\, d\phi\\
    &\leq 2R(n)^{-1} \int_0^{\pi/2-\delta} \sin^{n-3} \phi\, d\phi +
    R(n)^{-1} \int_{\pi/2-\delta}^{\pi/2+\delta} |F(\phi)|^j
    \sin^{n-3}\phi\, d\phi.
  \end{split}
\]

For the first term above, note that
\[
  R(n) = \frac{\pi^{1/2} \Gamma(n/2 - 1)}{\Gamma((n-1)/2)}.
\]
Take~$x = (n-2)/2$ and~$a = 1/2$ in~(7) of Wendel~\cite{Wendel1948} to get
\[
  R(n)^{-1} \leq \pi^{-1/2} ((n-2)/2)^{1/2} < n^{1/2}.
\]
Now
\[
  \begin{split}
    2R(n)^{-1} \int_0^{\pi/2-\delta} \sin^{n-3} \phi\, d\phi &\leq
    2n^{1/2} \int_0^{\pi/2-\delta} \sin^{n-3}(\pi/2-\delta)\, d\phi\\
    &=2n^{1/2}(\pi/2-\delta)\cos^{n-3}\delta\\
    &\leq \pi n^{1/2} \cos^{n-3}\delta.
  \end{split}
\]

For the second term we get directly
\[
  R(n)^{-1} \int_{\pi/2-\delta}^{\pi/2+\delta} |F(\phi)|^j \sin^{n-3}\phi\,
  d\phi \leq R(n)^{-1} \int_{\pi/2-\delta}^{\pi/2+\delta} C^j
  \sin^{n-3}\phi\, d\phi \leq C^j,
\]
and we are done.
\end{proof}

We can now prove the theorem.

\begin{proof}[Proof of Theorem~\ref{thm:mnt-asymptotic}]
Our strategy is to find a lower bound on the largest~$j_0$ such that
$\unijac{n}{j}(t) \geq 0$ for all~$j \leq j_0$.  Then we know that~$M_n(t)$
is attained by some~$j \geq j_0$, and we can use Lemma~\ref{lem:jac-decay}
to estimate~$|M_n(t)|$.

Recall~\cite[Theorem~3.3.2]{Szego1975} that the zeros of~$\unijac{n}{j}$
are all in~$[-1, 1]$ and that the rightmost zero of~$\unijac{n}{j+1}$ is to
the right of the rightmost zero of~$\unijac{n}{j}$.  Let~$C^\lambda_j$
denote the ultraspherical (or Gegenbauer) polynomial with
parameter~$\lambda$ and degree~$j$, so
\begin{equation}
  \label{eq:jac-gegen}
  \unijac{n}{j}(t) = \frac{C^{(n-2)/2}_j(t)}{C^{(n-2)/2}_j(1)}.
\end{equation}

Let~$x_j$ be the largest zero of~$C^\lambda_j$.  Elbert and
Laforgia~\cite[p.~94]{ElbertL1990} show that, for~$\lambda \geq 0$,
\[
  x_j^2 < \frac{j^2 + 2\lambda j}{(j + \lambda)^2}.
\]
If for a given~$j$ we have that
\begin{equation}
  \label{eq:k-estimate}
  \frac{j^2 + 2\lambda j}{(j + \lambda)^2} \leq t^2,
\end{equation}
then we know that the rightmost zero of~$C^\lambda_j$ is to the left
of~$t$, and so~$C^\lambda_j(t) \geq 0$.

The left-hand side in~\eqref{eq:k-estimate} is increasing in~$j$; let us
estimate the largest~$j$ for which~\eqref{eq:k-estimate} holds.  We want
\[
  j^2 + 2\lambda j - t^2(j+\lambda)^2 \leq 0.
\]
The left-hand side above is quadratic in~$j$ and, since~$t^2 < 1$, the
coefficient of~$j^2$ is positive.  So all we have to do is to compute the
largest root of the left-hand side, which is~$2a(t)\lambda$,
where~$a(t) = ((1 -t^2)^{-1/2} - 1)/2$.

Hence for~$j \leq 2a(t)\lambda$ we have~$C^\lambda_j(t) \geq 0$.
From~\eqref{eq:jac-gegen} we see that~$\unijac{n}{j}(t) \geq 0$ if
\[
  j \leq a(t)n - 2a(t).
\]
Now plug the right-hand side above into the upper bound of
Lemma~\ref{lem:jac-decay} to get
\[
  \begin{split}
    |M_n(t)| &\leq (\pi n^{1/2} \cos^{-3} \delta) \cos^n\delta +
    C^{a(t)n - 2a(t)}\\
    &=O(n^{1/2})(\cos\delta)^n + O(1)(C^{a(t)})^n,
  \end{split}
\]
with~$C$ as defined in Lemma~\ref{lem:jac-decay} with $\cos\theta = t$.
For any choice of~$\delta \in (0, \pi/2)$, we have that~$\cos\delta$,
$C \in (0, 1)$ and, since~$a(t) > 0$ for all~$t \in (0, 1)$, the theorem
follows by taking any~$c$ such that~$\max\{\cos\delta, C^{a(t)}\} < c < 1$.
\end{proof}

We now get exponential decay for~$\vartheta(S^{n-1}, k, t)$ for
any~$k \geq 3$ and~$t \in (0, 1)$.  Indeed, consider the
recurrence~$F_0 = t$ and~$F_i = F_{i-1}/(1+F_{i-1})$ for~$i \geq 1$, whose
solution is~$F_i = t/(1+it)$.  Using Theorem~\ref{thm:mnt-asymptotic} to
develop our analytic solution~\eqref{eq:sphere-theta-expr}, we get
\begin{equation}
  \label{eq:sphere-asymp-general}
  \vartheta(S^{n-1}, k, t) \sim \sum_{i=0}^{k-2} |M_{n-i}(F_i)| =
  \sum_{i=0}^{k-2} |M_{n-i}(t / (1 + it))|,
\end{equation}
where $a_n \sim b_n$ means that $\lim_{n \rightarrow \infty} a_n/b_n = 1$.
Since~$t / (1 + it) > 0$ for all~$i$, each term decays exponentially fast,
and so we get exponential decay for the sum.

We also get exponential decay for~$\vartheta(\R^n, k)$ for any~$k \geq 3$.
Indeed, from Theorem~\ref{thm:theta-rn-k} we have that
\begin{equation}
  \label{eq:rn-asymp-general}
  \vartheta(\R^n, k) \sim |m_n| + \sum_{i=0}^{k-3} |M_{n-i}(1 / (2+i))|.
\end{equation}
From Theorem~\ref{thm:mnt-asymptotic} we know that every term in the
summation above decays exponentially fast.  Bachoc, Nebe, Oliveira, and
Vallentin~\cite{BachocNOV2009} give an asymptotic bound for~$|m_n|$ that
shows that it also decays exponentially in~$n$, namely
\[
  |m_n| \leq (2/e + o(1))^{n/2} = (0.8577\ldots + o(1))^n.
\]
This finishes the proof of Theorem~\ref{thm:main-decay}.


\subsection{Explicit bounds}
\label{sec:exp-bounds}

We now compute explicit constants $c(k, t)$ and $c(k)$ which can serve as
bases for the exponentials in Theorem~\ref{thm:main-decay}, in particular
obtaining the bounds advertised in~\S\ref{sec:Ramsey}.

The constant~$c$ given in Theorem~\ref{thm:mnt-asymptotic} depends on~$t$.
Following the proof, we can find the best constant for every~$t \in (0, 1)$
by finding~$\delta \in (0, \pi/2)$ such that $\cos\delta = C^{a(t)}$, that
is, by solving the equation
\begin{equation}
  \label{eq:best-constant}
  \cos^4\delta = (t^2 + (1 - t^2) \sin^2\delta)^{(1-t^2)^{-1/2} - 1}
\end{equation}
and taking~$c = \cos\delta > 0$.

For any given~$t \in (0, 1)$ it is easy to solve~\eqref{eq:best-constant}
numerically.  For~$t = 1/2$ we get~$\cos\delta = 0.95621\ldots$ as a
solution, and so~$|M_n(1/2)| \leq (0.95622 + o(1))^n$, leading to the the
bound
\[
  \vartheta(\R^n, 3) \sim |M_n(1/2)| \leq (0.95622 + o(1))^n.
\] 
Figure~\ref{fig:best-c} shows a plot of the best constant~$c$ for
every~$t \in (0, 1)$.

\begin{figure}
  \begin{center}
    \includegraphics{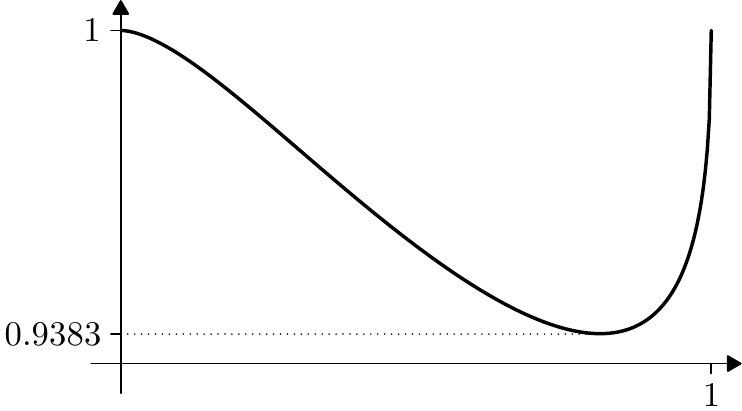}
  \end{center}

  \caption{The best constant~$c$ obtained in our proof of
    Theorem~\ref{thm:mnt-asymptotic} for each value of~$t \in (0, 1)$.}
  \label{fig:best-c}
\end{figure}

With a little extra work, it is possible to show that, for all~$k \geq 2$,
\begin{equation}
  \label{eq:rough}
  |M_n(1/k)| \leq \biggl(1-\frac{1}{9k^2}+o(1)\biggr)^n,
\end{equation}
whence
\[
  \vartheta(\R^n, k) \sim |M_{n-k+3}(1/(k-1))| \leq \biggl(1 -
  \frac{1}{9(k-1)^2} + o(1)\biggr)^n
\]
for all~$k \geq 3$.

Direct verification shows that~\eqref{eq:rough} holds for~$k = 2$, so let
us assume~$k \geq 3$.  Writing~$c$ for the (unique) positive solution
$\cos\delta$ of~\eqref{eq:best-constant} and taking~$\theta \in (0, \pi/2)$
such that $\cos\theta = t$, we can rewrite~\eqref{eq:best-constant} in the
more convenient form
\begin{equation}
  \label{eq:best-constant2}
  c^{4\sin\theta/(1-\sin\theta)} = 1 - c^2 \sin^2\theta.
\end{equation}

Now say~$c = 1-x$ and use Bernoulli's inequality~$(1 + z)^r \geq 1 + rz$ to
get
\begin{align*}
  &(1-x)^{4\sin\theta/(1-\sin\theta)} \geq 1 -
    \frac{4\sin\theta}{1-\sin\theta} x\quad\text{and}\\
  &1 - (1-x)^2 \sin^2\theta \leq 1 - (1-2x) \sin^2\theta.
\end{align*}
Equating the left-hand sides of both inequalities above and solving
for~$x$, we get
\[
  c = 1-x \leq 1 - \frac{\sin\theta (1-\sin\theta)}{4 + 2\sin\theta
    (1-\sin\theta)}.
\]
In particular, when $\cos\theta = 1/k$ we get
\begin{align*}
  |M_n(1/k)| &\leq \biggl(1 - \frac{1}{4k^2
               (1 + \sqrt{k^2/(k^2-1)}) + 2} + o(1)\biggr)^n \\
             &\leq \biggl(1 - \frac{1}{9k^2} + o(1)\biggr)^n
\end{align*}
for all $k \geq 3$.


\section*{Acknowledgments}

We would like to thank Christine Bachoc for helpful discussions and
comments at an early stage of this work.  We are also thankful to both
anonymous referees whose suggestions certainly improved this paper.


\end{document}